\DeclareSymbolFont{cyrletters}{OT2}{wncyr}{m}{n}
\DeclareMathSymbol{\Sha}{\mathalpha}{cyrletters}{"58}
\DeclareMathAlphabet{\cmcal}{OMS}{cmsy}{m}{n}
\def\blfootnote{\xdef\@thefnmark{}\@footnotetext}
\newtheoremstyle{thm}
  {3pt}
  {3pt}
  {\em}
  {0pt}
  {\bfseries}
  {}
  {5pt}
  {}
\newtheoremstyle{rem}
  {3pt}
  {3pt}
  {}
  {0pt}
  {\bfseries}
  {.}
  {5pt}
  {}
\newtheorem{thm}{Theorem}[section]
\newtheorem{lem}[thm]{Lemma}
\newtheorem{conj}[thm]{Conjecture}
\theoremstyle{definition}
\theoremstyle{rem}
\newtheorem{theorem}{Theorem}[section]
\newtheorem{corollary}[thm]{Corollary}
\newtheorem{lemma}[thm]{Lemma}
\newtheorem{proposition}[thm]{Proposition}
\newtheorem{definition}[thm]{Definition}
\newtheorem{remark}[thm]{{Remark}}
\numberwithin{equation}{section} \numberwithin{table}{section}
\newtheorem*{thm*}{Theorem}
\newtheorem*{rem*}{Remark}
\newtheorem*{rems*}{Remarks}
\newtheorem*{exam*}{Example}
\newtheorem*{exams*}{Examples}
\newcommand{\neutralize}[1]{\expandafter\let\csname c@#1\endcsname\count@}
\def\bos#1{{\mathbf{#1}}}
  \newcommand{\nc}{\newcommand}
  \newcommand{\be}{\begin{eqnarray*}}
  \newcommand{\ee}{\end{eqnarray*}}
  \newcommand{\bea}{\begin{eqnarray}}
  \newcommand{\eea}{\end{eqnarray}}
   \nc{\bei}{\begin{itemize}}
   \nc{\eei}{\end{itemize}}
   \nc{\bee}{\begin{enumerate}}
   \nc{\eee}{\end{enumerate}}
   \nc{\bet}{\begin{theorem}}
   \nc{\eet}{\end{theorem}}
   \nc{\bed}{\begin{definition}}
   \nc{\eed}{\end{definition}}
   \nc{\bel}{\begin{lemma}}
   \nc{\eel}{\end{lemma}}
   \nc{\bep}{\begin{proposition}}
   \nc{\eep}{\end{proposition}}
   \nc{\bec}{\begin{corollary}}
   \nc{\eec}{\end{corollary}}
   \nc{\ber}{\begin{remark}}
   \nc{\eer}{\end{remark}}
   \nc{\beex}{\begin{example}}
   \nc{\eeex}{\end{example}}
   \nc{\bpm}{\begin{pmatrix}}
   \nc{\epm}{\end{pmatrix}}
   \nc{\bspm}{\left(\begin{smallmatrix}}
   \nc{\espm}{\end{smallmatrix}\right)}
\newcommand{\cA}{\mathcal{A}}
\newcommand{\cF}{\mathcal{F}}
\newcommand{\cH}{\mathcal{H}}
\newcommand{\cO}{\mathcal{O}}
\newcommand{\bC}{\mathbb{C}}
\newcommand{\bF}{\mathbb{F}}
\newcommand{\bQ}{\mathbb{Q}}
\newcommand{\bZ}{\mathbb{Z}}
\nc{\frf}{\mathfrak{f}} 
\newcommand{\frp}{\mathfrak{p}}      
\nc{\frs}{\mathfrak{s}}  
\nc{\frt}{\mathfrak{t}} 
\nc{\fru}{\mathfrak{u}}
\nc{\lsl}{\mathfrak{sl}}
\nc{\lgl}{\mathfrak{gl}}
\nc{\upsi}{\underline{\psi}}
\nc{\uchi}{\underline{\chi}}
\DeclareMathOperator{\Tr}{Tr}
\DeclareMathOperator{\Gal}{Gal}
\DeclareMathOperator{\Hom}{Hom}
\DeclareMathOperator{\Spec}{Spec}
\DeclareMathOperator{\End}{End}
\newcommand{\lra}{\longrightarrow}    
\nc{\surjto}{\twoheadrightarrow}
\nc{\ts}{\times}
\nc{\ds}{\displaystyle}
\nc{\nd}{\noindent}  
\nc{\ud}{\underline}
\nc{\ov}{\overline}
\nc{\maplra}[1]{\buildrel #1 \over \lra}
\nc{\mapto}[1]{\buildrel #1 \over \to}
\nc{\setb}[1]{\{  #1\}}
 \nc{\cHom}{\mathcal{H}om}
\nc{\cdruur}[8] {\begin{CD} 
#1 @>#2>> #3\\ 
@AA#4A @AA#5A\\ 
#6 @>#7>> #8 
\end{CD} }
\nc{\cdrddr}[8] {\begin{CD} 
#1 @>#2>> #3\\ 
@VV#4V @VV#5V\\ 
#6 @>#7>> #8 
\end{CD} }
\nc{\dia}[8]{\xymatrix{ 
&#1 \ar@{-}[ld]_{#2} \ar@{-}[rd]^{#3} \\
#4 \ar@{-}[rd]_{#6} & &#5 \ar@{-}[ld]^{#7}\\ 
&#8} }
\nc{\diam}[9]{\xymatrix{ 
&#1 \ar@{-}[ld]_{#2}  \ar@{-}[d]^{#3} \ar@{-}[rd]^{#4} \\
#5 \ar@{-}[rd]_{#8}     & #6 \ar@{-}[d]_{#9}      & #7   \ar@{-}[ld]^{2} \\
& \bQ} } 
\nc{\sumn}[2][n]{#2_{1} +#2_{2}+ \cdots + #2_{#1}}
\nc{\poly}[3][n]{#2_{#1}#3^{#1} +#2_{#1-1}#3^{#1-1}  \cdots + #2_{1} #3+ #2_0}
\nc{\dpoly}[3][n]{#1#2_{#1}#3^{#1-1} +(#1-1)#2_{#1-1}#3^{#1-1}  \cdots +2 #2_{2} #3+ #2_1}
\nc{\mpoly}[3][n]{#3^{#1} +#2_{#1-1}#3^{#1-1}  \cdots + #2_{1} #3+ #2_0}
\nc{\vpar}[4]{    \left \{ \begin{array}{cc} #1 & \textrm{if } #2, \\
&\\
#3 & \textrm{if } #4. 
\end{array}\right. }
\nc{\vparr}[4]{    \left \{ \begin{array}{cc} #1 & \textrm{if } #2, \\
&\\
#3 & \textrm{if } #4, 
\end{array}\right. }
\nc{\ary}[5]{#1: \left\{ \begin{array}{ll} #2 &\mapsto #3 \\ #4 &\mapsto #5 \end{array} \right.}
 \nc{\bedm}{\begin{displaymath}}
 \nc{\eedm}{\end{displaymath}}
 \nc{\art}{\hbox{\bf Art}^\Z}
 \nc{\bvx}{\bos{B\!\!V}_{\! \!X}}
\newcommand{\pmat}{\left(\begin{matrix}}   
\newcommand{\epmat}{\end{matrix}\right)}   
\newcommand{\psmat}{\left(\begin{smallmatrix}}    
\newcommand{\epsmat}{\end{smallmatrix}\right)}
\nc{\twotwo}[4]{\pmat #1 & #2 \\ #3 & #4 \epmat}
\nc{\thrthr}[9]{\pmat #1 & #2 & #3 \\ #4 & #5 & #6 \\ #7 & #8 & #9 \epmat}
\nc{\stwotwo}[4]{\psmat #1 & #2 \\ #3 & #4 \epsmat}
\nc{\sthrthr}[9]{\psmat #1 & #2 & #3 \\ #4 & #5 & #6 \\ #7 & #8 & #9 \epsmat}
\def\eqalign#1{\null\,\vcenter{\openup\jot\m@th
\ialign{\strut\hfil$\displaystyle{##}$&$\displaystyle{{}##}$\hfil
\crcr#1\crcr}}\,}
\def\eqn#1#2{
\xdef #1{(\nsecsym\the\meqno)}
\global\advance\meqno by1
$$#2\eqno#1\eqlabeL#1
$$}
\def\a{\alpha}
\def\g{\gamma}
\def\cA{{\mathcal A}}
\def\R{\mathbb{R}}
\def\C{\mathbb{C}}
\def\Z{\mathbb{Z}}
\def\Q{\mathbb{Q}}
\def\ms{\mathfrak{s}}
\def\rd{\partial}
\def\Ker{\hbox{Ker}\;}
\def\mod{\hbox{ }mod\hbox{ }}
\begin{document}


\catcode`\@=11 

\global\newcount\nsecno \global\nsecno=0
\global\newcount\meqno \global\meqno=1
\def\newsec#1{\global\advance\nsecno by1
\eqnres@t
\section{#1}}
\def\eqnres@t{\xdef\nsecsym{\the\nsecno.}\global\meqno=1}
\def\sequentialequations{\def\eqnres@t{\bigbreak}}\xdef\nsecsym{}

\def\draftmode{\message{ DRAFTMODE }

{\count255=\time\divide\count255 by 60 \xdef\hourmin{\number\count255}
\multiply\count255 by-60\advance\count255 by\time
\xdef\hourmin{\hourmin:\ifnum\count255<10 0\fi\the\count255}}}
\def\nolabels{\def\wrlabeL##1{}\def\eqlabeL##1{}\def\reflabeL##1{}}
\def\writelabels{\def\wrlabeL##1{\leavevmode\vadjust{\rlap{\smash%
{\line{{\escapechar=` \hfill\rlap{\tt\hskip.03in\string##1}}}}}}}%
\def\eqlabeL##1{{\escapechar-1\rlap{\tt\hskip.05in\string##1}}}%
\def\reflabeL##1{\noexpand\llap{\noexpand\sevenrm\string\string\string##1}
}}

\nolabels

\def\eqn#1#2{
\xdef #1{(\nsecsym\the\meqno)}
\global\advance\meqno by1
$$#2\eqno#1\eqlabeL#1
$$}

\def\eqalign#1{\null\,\vcenter{\openup\jot\m@th
\ialign{\strut\hfil$\displaystyle{##}$&$\displaystyle{{}##}$\hfil
\crcr#1\crcr}}\,}

\def\ket#1{\left|\bos{ #1}\right>}\vspace{.2in}
   \def\bra#1{\left<\bos{ #1}\right|}
\def\oket#1{\left.\bos{ #1}\right>}
\def\obra#1{\left<\bos{ #1}\right.}
\def\epv#1#2#3{\left<\bos{#1}\left|\bos{#2}\right|\bos{#3}\right>}
\def\qbvk#1#2{\bos{\left(\bos{#1},\bos{#2}\right)}}
\def\Hoch{{\tt Hoch}}
\def\rrd{\up{\rightarrow}{\rd}}
\def\lrd{\up{\leftarrow}{\rd}}
   \nc{\hr}{[\![\hbar]\!]}
   \nc{ \cAb}{\cA\!(b)}
   \nc{\bNn}{\bZ_{\geq 0}}
   \nc{\Ab}{A\!(b)}
   \nc{\modulo}{\operatorname{mod}}
   
\def\foot#1{\footnote{#1}}

\catcode`\@=12 

\def\fr#1#2{{\textstyle{#1\over#2}}}
\def\Fr#1#2{{#1\over#2}}
\def\ato#1{{\buildrel #1\over\longrightarrow}}

\newcommand{\QFT}{\operatorname{QFT}}
\newcommand{\zell}{{\Z/{\ell\Z}}}
\newcommand{\muell}{{\mu_{\ell}}}
\newcommand{\Gm}{{\mathbb{G}}_{\operatorname{m}}}
\newcommand{\Zmod}[1]{\overline{\Z/{#1}\Z}}
\newcommand{\zmod}[1]{{\Z/{#1}\Z}}
\newcommand{\zmodd}[1]{{{\frac{1}{#1}}\Z}/\Z}
\newcommand{\Inv}{{\operatorname{Inv}}}
\newcommand{\inv}{{\operatorname{inv}}}
\newcommand{\Et}{{\operatorname{Et}}}
\def\invlim{\varprojlim}
\def\nZ{\frac{1}{n}\Z/\Z}
\newcommand{\pair}[2]{\langle #1, \,#2 \rangle}
\newcommand{\legendre}[2]{\genfrac{(}{)}{}{}{#1}{#2}}
\newcommand{\xyv}[1]{\xymatrixrowsep{#1 pc}}
\def\ms{\medskip}
\newcommand{\dlog}{\mathrm{dlog}}
\newcommand{\on}{\operatorname}
\def\lk{\ell k}
\newcommand\rat{{\mathbb Q}}
\newcommand\ratinfp{{\rat_{\infty,p}}}
\newcommand{\Ext}{{\operatorname{Ext}}}
\newcommand{\gl}{\operatorname{\mathfrak{l}}}
\newcommand{\gp}{\operatorname{\mathfrak{p}}}
\newcommand{\Tei}{\operatorname{Tei}}
\newcommand{\barQp}{\operatorname{\overline{\mathbf{Q}}_{\it{p}}}}
\draftmode

\newcommand{\loc}{\operatorname{loc}}

\newcommand{\bd}{\boldsymbol}
\newcommand{\Ent}{\operatorname{Ent}}

\title{Entanglement entropies in the abelian arithmetic Chern-Simons theory}

\author{Hee-Joong Chung}
\email{hjchung@jejunu.ac.kr}
\address{H.-J. C.: Department of Science Education, Jeju National University, 102 Jejudaehak-ro, Jeju-si, Jeju-do, 63243, South Korea}

\author{Dohyeong Kim}
\email{dohyeongkim@snu.ac.kr}
\address{D.K: Department of Mathematical Sciences and Research Institute of Mathematics, Seoul National University, 1 Gwanak-ro, Gwanak-gu, Seoul 08826, South Korea}

\author{Minhyong Kim}
\email{minhyong.kim@icms.org.uk}
\address{M.K.: International Centre for Mathematical Sciences, 47 Potterrow,  Edinburgh EH8 9BT\\
Korea Institute for Advanced Study, 85 Hoegiro, Dongdaemungu, Seoul, South Korea}

\author{Jeehoon Park}
\email{jpark.math@gmail.com}
\address{J.P: QSMS, Seoul National University, 1 Gwanak-ro, Gwanak-gu, Seoul 08826, South Korea}

\author{Hwajong Yoo}
\email{hwajong@snu.ac.kr}
\address{H.Y.: College of Liberal Studies and Research Institute of Mathematics,
Seoul National University, 1 Gwanak-ro, Gwanak-gu,  Seoul 08826, South Korea
}

\date{}

\blfootnote{2010 \textit{Mathematics Subject Classification.} 11R34, 81P40, 81T99}

\maketitle

\begin{abstract}
The notion of {\em entanglement entropy} in quantum mechanical systems is an important quantity, which measures how much a physical state  is entangled in a composite system. Mathematically, it measures how much the state vector is not decomposable as elements in the tensor product of two Hilbert spaces. In this paper, we seek its arithmetic avatar: the theory of arithmetic Chern-Simons theory with finite gauge group $G$ naturally associates a state vector inside the product of two quantum Hilbert spaces and we provide a formula for the {\em von Neumann entanglement entropy} of such state vector when $G$ is a cyclic group of prime order.

\end{abstract}

\tableofcontents

\section{Introduction}
{\em Arithmetic topological quantum field theory} is an area that is gradually growing into a substantial direction of research based on examples \cite{kim, CKKPY20,  CKKPPY19, HKM} and analogies \cite{CCKKPY} as well as the potential to cast light on phenomena as deep as the Langlands programme \cite{BSV}. This paper is mostly concerned with a new arithmetic invariant of collections of primes that imports to number theory a well-known quantity  in quantum mechanics that has received a good deal of attention in recent years in the context of quantum foundation and quantum computation \cite{bell}.

The notion of {\em entanglement} in quantum mechanical systems is  straightforward from a mathematical point of view. The composite of two systems $A$ and $B$ has state space modelled by a tensor-product Hilbert space
$$\cH=\cH_A\otimes \cH_B$$
and an entangled state in $\cH$ is merely a vector that is not decomposable. Recall that a decomposable vector in $\cH$ is one of the form $v\otimes w$. In particular, in any reasonable sense,  a state is entangled with probability 1. This elementary notion becomes of interest in physics already in the simplest situation when we have a state of the form
$$\psi:=\frac{1}{\sqrt{2}}(v_1\otimes w_1+v_2\otimes w_2),$$
where $v_1, v_2$ are orthonormal eigenvectors for an observable $F$ of system $A$ and $w_1, w_2$ are orthonormal eigenvectors for an observable $G$ of system $B$.
In this case, $v_1\otimes w_1$ and $v_2\otimes w_2$ are orthonormal eigenstates of the observables $F\otimes I, I\otimes G,$  and $F\otimes G$ of the composite system and $\psi$ is a superposition of them. Thus, if a measurement is made of $F\otimes I$, for example, then $\psi $ will `collapse' into $v_1\otimes w_1$ or $v_2\otimes w_2$. Which is the case can be determined just by looking at system $A$.  However, if the state of system $A$ is $v_1$, then the state of system $B$ must be $w_1$, in spite of the fact that $G$ has not been measured at all. Thus, the states of the two systems have become `entangled'. Particular interest is attached to the situation where $A$ and $B$ denote different regions of space.  The state $\psi$ as above indicates that the state of the system in a region could be entangled with the state in a very distant region.

\subsection{TQFT and entanglement entropy}
In \cite{BFLP}, a variation on this scenario is considered where $M$ is an $n$-manifold with boundary $\partial M= A\sqcup  B$, and we are given an $n$-dimensional unitary TQFT (topological quantum field theory) $Z$. Then the theory will assign a vector in the tensor product of  Hilbert spaces $Z_A$ and $Z_B$
\be 
Z_M\in Z_A\otimes Z_B,
\ee
that can be entangled. One interpretation is that $A\sqcup B$ is space, and the theory has created the state $Z_M$ from the vacuum along the spacetime $M$. In particular, the authors in \cite{BFLP} take $M$ to be the complement of  tubular neighbourhoods of two knots $C,D$, in which case the entanglement of $Z_M$ can be considered as a quantum manifestation of linking, in some sense.

It is useful at this point to utilize a well-known numerical measure of entanglement, namely, the {\em von Neumann entanglement entropy}. The definition is made in terms of partial traces and the formalism of mixed states.
For this, we will avoid technicalities by assuming all Hilbert spaces (Hermitian inner product spaces) to be finite-dimensional. One takes a normalised state $\psi\in \cH=\cH_A\otimes \cH_B$ and regards it as a projection operator $\pi_{\psi}: \cH\to \cH$. We have the isomorphism
$$\End(\cH_A\otimes \cH_B)\simeq \End(\cH_A)\otimes \End(\cH_B)$$
and the linear map
$$\Tr: \End(\cH_B)\to \C.$$
Hence, there is a `partial trace' map
$$\Tr_B: \End(\cH_A\otimes \cH_B)\to \End(\cH_A).$$
We define the reduced density matrix of $\psi$ to be 
$$\rho_{\psi, A}:=\Tr_{B}(\pi_\psi):\cH_A \to \cH_A,$$
using which the entanglement entropy of $\psi$ is defined by 
\bea\label{entropy}
\Ent(\psi):=-\Tr(\rho_{\psi, A}\log \rho_{\psi, A}).
\eea
Even though such an expression for entropy may be familiar, the computation is not entirely easy. 
It can be shown from the singular value decomposition that the expression
$$-\Tr(\rho_{\psi, B}\log \rho_{\psi, B}) $$
yields the same number.
For the moment, we merely note that $\Ent(\psi)=0$ if and only if $\psi$ is decomposable.

\subsection{Arithmetic TQFT and the main theorem}
The upshot of \cite{BFLP} is that $\Ent(Z_M)$ is a refined linking invariant when $Z$ is a topological quantum field theory like Chern-Simons theory. It is this framework one can try to recreate in the setting of arithmetic topological quantum field theory. 
We fix a prime $p$.
Let $F$ be a totally imaginary number field and $S$ be a finite set of primes in the ring $\cO_F$ of integers such that $S$ contains all the prime ideals dividing $p$.
Let 
$${X_S}=\Spec(\cO_F)\setminus S.$$
A choice of a finite gauge group $G$ and a 3-cocycle in $H^3(G,\Z/p\Z)$ determines the arithmetic Chern-Simons theory. Then the theories of \cite{kim, CKKPY20, HKM} associate a normalised state vector (see \eqref{vector} for details)
\bea\label{invector}
Z_{X_{S_1,S_2}} \in \cH_{S_1} \otimes \cH_{S_2}
\eea
 in the product of two Hilbert spaces $\cH_{S_1}$ and $\cH_{S_2}$, whenever we have a decomposition $S=S_1\sqcup S_2$. 
The main goal of this paper will be to compute the entanglement entropy of $Z_{X_{S_1,S_2}}$ when $G$ is cyclic of prime order $p$, thereby obtaining a sense of the  information it contains. 

Let $\Pi^S=\Gal({F}^S/{F})$ where ${F}^S$ is the maximal unramified extension of ${F}$ outside $S$.
We consider the case $G=\Z/p\Z$; let
$$
\cF_{{X_S}}:=\Hom_{cts}(\Pi^S, \Z/p\Z)=H^1(\Pi^S,\Z/p\Z)
$$
be the set of continuous group homomorphisms from $\Pi^S$ to $\Z/p\Z$.
Consider the localisation maps 
$$\loc^{S}_{S_i}: H^1(\Pi^S, \Z/p\Z)\to \prod_{\frp \in S_i} H^1(\Pi_\frp, \Z/p\Z), \quad i=1,2$$
where $\Pi_\frp=\Gal(\overline{F}_{\frp}/F_{\frp})$ is the absolute Galois group of $F_\frp$. 
A rather simple formula is given by the following:

\begin{theorem}\label{gthm}
The entanglement entropy of $Z_{X_{S_1,S_2}}$ associated to the arithmetic Chern-Simons theory for the gauge group $G=\Z/p\Z$ and the 3-cocycle\footnote{See \eqref{cchoice} for its precise definition.} $[c]\in H^3(\Z/p\Z, \Z/p\Z)$, is given by
$$
\Ent(Z_{X_{S_1,S_2}}) =\left(\dim_{\bF_p}(\cF_{{X_S}})-\dim_{\bF_p} \biggr(\Ker(\loc^S_{S_1}) + \Ker(\loc^S_{S_2})\biggl)\right)\log p.
$$
\end{theorem}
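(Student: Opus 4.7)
The plan is to compute the reduced density matrix $\rho_A := \Tr_{\cH_{S_2}}(\pi_{\psi})$ of $\psi := Z_{X_{S_1, S_2}}$ explicitly and to exhibit it as a uniform projection onto a subspace of dimension $p^d$, where
\[d := \dim_{\bF_p}(\cF_{X_S}) - \dim_{\bF_p}\bigl(\ker(\loc^S_{S_1}) + \ker(\loc^S_{S_2})\bigr).\]
Once this is done, the unit-trace condition forces the $p^d$ nonzero eigenvalues of $\rho_A$ to all equal $1/p^d$, and hence
\[\Ent(\psi) = -p^d \cdot p^{-d} \log(p^{-d}) = d \log p,\]
which is the claimed formula.

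To begin, I would write $\psi$ in the canonical basis of $\cH_{S_i} = \bigotimes_{\frp \in S_i} \C[H^1(\Pi_\frp, \Z/p\Z)]$. From the construction summarised in \eqref{invector}, the state takes the form
\[\psi = C \sum_{\alpha \in \cF_{X_S}} \zeta_p^{\operatorname{CS}(\alpha)}\, |\loc^S_{S_1}(\alpha)\rangle \otimes |\loc^S_{S_2}(\alpha)\rangle,\]
with $C$ a normalising constant, $\zeta_p := e^{2\pi i/p}$, and $\operatorname{CS}(\alpha) \in \Z/p\Z$ the arithmetic Chern-Simons action of $\alpha$ attached to $[c]$. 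Writing $L_i := \loc^S_{S_i}$ and $K_i := \ker L_i$, the partial trace yields the matrix element
\[\langle v | \rho_A | v' \rangle = |C|^2 \sum_{\substack{\alpha, \beta \in \cF_{X_S} \\ L_1(\alpha) = v,\, L_1(\beta) = v' \\ L_2(\alpha) = L_2(\beta)}} \zeta_p^{\operatorname{CS}(\alpha) - \operatorname{CS}(\beta)}\]
for $v, v' \in L_1(\cF_{X_S})$, and zero otherwise.

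The crucial reduction is that the Chern-Simons phase is \emph{local} with respect to the partition $S = S_1 \sqcup S_2$, and so does not affect the entanglement entropy. Since $[c]$ is built from cup products and the Bockstein on $\Z/p\Z$-coefficient cohomology, and the boundary of $X_S$ decomposes as $\bigsqcup_{\frp \in S_1} \Spec F_\frp \;\sqcup\; \bigsqcup_{\frp \in S_2} \Spec F_\frp$, arithmetic duality should yield a decomposition
\[\operatorname{CS}(\alpha) \equiv f_1\bigl(L_1(\alpha)\bigr) + f_2\bigl(L_2(\alpha)\bigr) \pmod{p}\]
for suitable $\Z/p\Z$-valued functions $f_i$ defined on the local image spaces, so that $\zeta_p^{\operatorname{CS}(\alpha)}$ acts on $\psi$ as a local unitary $U_1 \otimes U_2$. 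Replacing $\psi$ by its untwisted version $\widetilde\psi := C \sum_\alpha |L_1(\alpha)\rangle \otimes |L_2(\alpha)\rangle$ then preserves the entanglement entropy. A direct count of pairs $(\alpha, \beta)$ using the substitution $\gamma := \alpha - \beta \in K_2$ shows that $\langle v | \widetilde\rho_A | v' \rangle = |C|^2 \, |K_1| \, |K_1 \cap K_2|$ precisely when $v, v' \in L_1(\cF_{X_S})$ lie in a common coset of $L_1(K_2) = (K_1 + K_2)/K_1 \subseteq \cF_{X_S}/K_1$, and vanishes otherwise. Hence $\widetilde\rho_A$ is block-diagonal with blocks indexed by $\cF_{X_S}/(K_1 + K_2)$, a set of cardinality $p^d$, each block being a rank-one all-ones matrix; this yields exactly $p^d$ equal nonzero eigenvalues.

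The main obstacle is the local-unitary reduction: one must verify the factorisation $\operatorname{CS}(\alpha) \equiv f_1(L_1(\alpha)) + f_2(L_2(\alpha))$ by an explicit cup-product calculation, presumably invoking Poitou-Tate or Artin-Verdier duality applied to the pairing induced by $[c]$. Should such a clean local decomposition be unavailable, an alternative is to analyse the twisted sum directly by exploiting the quadratic structure of $\operatorname{CS}$, writing $\operatorname{CS}(\beta+\gamma) - \operatorname{CS}(\beta) = \operatorname{CS}(\gamma) + B(\beta, \gamma)$ for a bilinear pairing $B$, and then using character orthogonality to show that the inner sum over $\beta$ in a coset of $K_1$ vanishes unless the restriction $B(\cdot, \gamma)|_{K_1}$ is trivial; the resulting block structure and spectrum of $\rho_A$ are the same, and the entropy formula follows.
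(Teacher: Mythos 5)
Your linear-algebra endgame is sound and is essentially the same computation as the paper's: once one knows that the amplitude of the state is a single constant on the support $\loc_S(\cF_{X_S})\subset \cF_{S_1}\times\cF_{S_2}$, the coset structure of that support forces the (scaled) matrix $A^tA$ to be block diagonal with $p^{d}$ identical rank-one all-ones blocks, where $d=\dim_{\bF_p}\cF_{X_S}-\dim_{\bF_p}(\Ker(\loc^S_{S_1})+\Ker(\loc^S_{S_2}))$, and the entropy $d\log p$ follows. The genuine gap is the step you defer: the claim that the Chern--Simons phase factorises as $\mathrm{CS}(\alpha)\equiv f_1(\loc^S_{S_1}\alpha)+f_2(\loc^S_{S_2}\alpha)$, so that the twist is a local unitary. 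This is not a formality that ``arithmetic duality should yield''; it is the entire arithmetic content of the theorem (note also that when $\Ker(\loc_S)\neq 0$ your claim further forces the phase to be constant on fibres of $\loc_S$, which again needs proof). The paper establishes the needed statement in two stages: first, using that the entropy is independent of the choice of section of the torsor $\mathcal{CS}_S\to\cF_S$, it picks the Kummer-theoretic sections \eqref{localsection} and \eqref{globalsection}, and under the hypothesis $Cl(X_S)[p]=0$ (which allows the global and local Kummer liftings to $\Z/p^2\Z$ to be chosen compatibly) shows the phase $\mathcal{CS}^{x_S}_{X_S}(\rho)$ vanishes identically (Lemma \ref{trivial}); second, it removes that hypothesis via the glueing formula of \cite{HKM}, enlarging $S$ to $S'=S\sqcup S_3$ with $Cl(X_{S'})[p]=0$ and showing (Lemmas \ref{inner}, \ref{triviality}) that the fibre-summed amplitude $\tilde Z^{x_S}_{X_S}(\rho_S)$ equals one and the same constant $\mu_{S_3}$ for every $\rho_S$ in the image of $\loc_S$. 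Nothing in your sketch substitutes for this input.

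Your fallback argument has the same problem in a different guise: writing $\mathrm{CS}(\beta+\gamma)-\mathrm{CS}(\beta)=\mathrm{CS}(\gamma)+B(\beta,\gamma)$ and invoking character orthogonality does organise the computation, but your assertion that ``the resulting block structure and spectrum of $\rho_A$ are the same'' is exactly what must be proved. If the pairing $B$ coupled the two localisations nontrivially (the arithmetic analogue of a linking pairing), the support and spectrum of the reduced density matrix would change and so would the entropy; indeed the paper's footnote records that the authors originally hoped for such a coupling, and the theorem says it is absent \emph{for this particular cocycle} \eqref{cchoice} and under $\mu_{p^2}\subset F$ --- a fact whose proof requires the Kummer/class-field-theoretic analysis and the glueing formula described above, not general structure of bipartite states.
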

Even though not much structure is evident in the formula, it does bring forth the interaction of invariants that are normally somewhat difficult to consider, namely, the images of the two separate localisation maps from $S$-ramified cohomology. The image of the localisation to both sets of primes is the subject of the Poitou-Tate duality, but each of the individual images is more mysterious and not subject to much general analysis. 
The Poitou-Tate duality tells us about 
$$\dim(\Ker(\loc^{S}_{S_1})\cap \Ker(\loc^{S}_{S_1})).$$
When combined with above theorem, we can compute
$$\dim(\Ker(\loc^{S}_{S_1})+\Ker(\loc^{S}_{S_1})),$$
a quantity not normally considered in classical duality theory. 
The TQFT analogy calls attention thereby to this  natural arithmetic quantity. 

It should be admitted that the results of this paper do not yet make clear that the notion of entanglement is useful for number theory.\footnote{Though $\Ent(Z_{X_{S_1,S_2}})$ was expected to contain some sort of information on arithmetic linking numbers of primes like the topological case \cite{BFLP}, it turns out not in our setting. One might try to change $[c]$ and $G$ to obtain such information, but one seems to need a new idea for an explicit computation of entanglement entropy.} In future work, we will investigate entanglement for different choices of a 3-cocycle $[c]$ and a gauge group $G$. Moreover we will study entanglement for arithmetic BF theories, relate the computation of entanglement entropy to the path integrals of `L-function type' in \cite{CK}, and seek out applications to questions of concrete arithmetic interest. In the meanwhile, it is hoped that the existence of new invariants of number fields and primes that make use of essentially classical machinery within a TQFT framework will be of intrinsic interest to arithmeticians and mathematical physicists.

\subsection{Acknowledgement}

The work of D.K. was supported by the National Research Foundation of Korea (2020R1C1C1A0100681913) and by Samsung Science and Technology Foundation (SSTF-BA2001-01).
M.K. was supported in part by  UKRI grant EP/V046888/1 and a Simons Fellowship at the Isaac Newton Institute. The work of J.P. was supported by the National Research Foundation of Korea (NRF-2021R1A2C1006696) 
and the National Research Foundation of Korea (NRF) grant funded by the Korea government (MSIT) (No.2020R1A5A1016126). The work of H.Y. was supported by the National Research Foundation of Korea(NRF) grant funded by the Korea government(MSIT) (No. 2020R1A5A1016126).

\section{Abelian arithmetic Chern-Simons theory and entanglement entropy} \label{sec2.1}

We fix a gauge group $G=\Z/p\Z$.
Assume that $F$ contains $\mu_{p^2}$
(where $\mu_n$ is the group of $n$-th roots of unity in the algebraic closure $\overline F$).
Consider the Bockstein exact sequence
$$
0 \xrightarrow{} \bZ/p\bZ \xrightarrow{}  \bZ/p^2\bZ  \xrightarrow{\mod p}  \bZ/p\bZ.
$$
Let $s$ be a set-theoretic splitting of $\mod p$-map.
Then this gives us a 3-cocycle
\begin{eqnarray}\label{cchoice}
c=\a \cup d (s (\a)) \in Z^3(\Z/p\Z, \Z/p\Z)
\end{eqnarray}
where 
$$
(\a: \Z/p\Z \to \Z/p\Z) \in H^1(\Z/p\Z, \Z/p\Z)
$$
is the identity map and $d$ is the differential on inhomogeneous cochains.
From now on, we fix a section $s$ and the 3-cocycle $c$, and work with
the ACST (arithmetic Chern-Simons theory) as developed in \cite{kim, CKKPY20, HKM} associated to $F, G,$ and $c$.
Since $G$ is abelian, we call such theory the abelian ACST.

\subsection{Hilbert spaces and state vectors}
We fix a finite set $S$ of prime ideals of $\cO_{{F}}$, which contains all the prime ideals dividing $p$. 
Let $r$ be the cardinality of $S$ and $S=\{\frp_1, \ldots, \frp_r\}$.
Here we briefly review how to construct the finite dimensional Hilbert space $\cH_S$ and a normalised vector in $\cH_S$ in the abelian ACST.

For any prime ideal $\frp$ of $\cO_{F}$, let $\Pi_{\frp}=\Gal(\overline {F}_{\frp}/{F}_\frp)$ where $\overline {F}_{\frp}$ is the algebraic closure of the local field ${F}_\frp$. We define 
\bea \label{localfields}
\cF_S=\cF_{Y_S}:=\prod_{i=1}^r \cF_{Y_{\frp_i}}, \quad \cF_\frp=\cF_{Y_\frp}:=\Hom_{cts}(\Pi_{\frp}, \Z/p\Z).
\eea
The general theory \cite{kim, CKKPY20, HKM} associates a $\Z/p\Z$-torsor $\mathcal{CS}_S$ over $\cF_S$
(Definition \ref{torsordef}).
Let us briefly recall it here.
For $\frp \in S$, we define
$$
\mathcal{CS}_{\frp}(\rho_\frp):=d^{-1}(c \circ \rho_\frp) \mod B^2(\Pi_\frp, \Z/N\Z), \quad \rho_\frp \in \cF_{\frp}.
$$
Then $\mathcal{CS}_\frp(\rho_\frp)$ becomes a $\Z/p\Z$-torsor via the local invariant map $\inv_\frp: H^2(\Pi_{\frp}, \Z/p\Z) \simeq \Z/p\Z$.
Then we define
$$
\varpi_{\frp}:\mathcal{CS}_\frp=\bigsqcup_{\rho_\frp \in \cF_\frp} \mathcal{CS}_\frp(\rho_\frp) \to \cF_\frp, \quad \varpi_{\frp}(\alpha_\frp):=\rho_\frp
$$
where $\alpha_\frp \in \mathcal{CS}_\frp(\cF_\frp)$ (i.e., $d(\alpha_\frp)=c\circ \rho_\frp$). This $\mathcal{CS}_\frp$ becomes a $\Z/p\Z$-torsor over $\cF_\frp$.
For $\rho_S=(\rho_{\frp_1}, \ldots, \rho_{\frp_r}) \in \cF_S$, we define
$$
\mathcal{CS}_S(\rho_S):=\prod_{i=1}^r \mathcal{CS}_{\frp_i}(\rho_{\frp_i})/\sim
$$
where $(\alpha_1, \ldots, \alpha_r) \sim (\alpha_1', \ldots, \alpha_r')$ if and only if $\sum_{i=1}^r \inv_{\frp_i} (\alpha_i - \alpha_i')=0$.
Let 
$
\mathcal{CS}_S=\bigsqcup_{\rho_{S} \in \cF_{S}} \mathcal{CS}_{S}(\rho_{S}).
$
\begin{definition}\label{torsordef}
Define a $\Z/p\Z$-torsor $\varpi_S : \mathcal{CS}_S \to \cF_S$ by
$$
\varpi_S(\alpha_S)=(\varpi_{\frp_1}(\alpha_{\frp_1}), \ldots, \varpi_{\frp_1}(\alpha_{\frp_1})), \quad \alpha_S = [(\alpha_{\frp_1}, \ldots, \alpha_{\frp_r})]\in \mathcal{CS}_S(\rho_S).
$$
\end{definition}

We associate a $\C$-line bundle $\pi_S: {CS}_S\to \cF_S$ to $\varpi_{S}: \mathcal{CS}_{S} \to \cF_{S}$
$$
{CS}_S=\mathcal{CS}_S \times_{\Z/p\Z} \C=\mathcal{CS}_S \times \C/\sim
$$
where $([\alpha_S], z) \sim ([\alpha_S] \cdot m, \zeta_p^{-m} z)$ for $m \in \Z/p\Z$ and $\zeta_p$ is a primitive $p$-th root of unity.

\begin{definition}[Hilbert spaces]
We define
$$
\cH_S:=\Gamma(\cF_S, {CS}_S) 
$$
to be the space of global sections of the line bundle ${CS}_S$. Note that $\cH_S$ has a canonical Hilbert space structure because the line bundle ${CS}_S$ comes from an $U(1)$-torsor (and hence has a Hermitian metric). Let $||\cdot||$ be the associated norm on $\cH_S$.
\end{definition}

The natural embedding $\iota_{\frp_i}: \Pi_{\frp_i} \to \Pi^S$ gives us the restriction homomorphism
\be
\loc_S: \cF_{{X_S}} \to \cF_S, \quad \rho\mapsto (\rho \circ \iota_{\frp_i})_i.
\ee
Note that there is a map  $\iota: \mathcal{CS}_S(\rho_S) \to {CS}_S(\rho_S)$ defined by 
$
\iota ([\alpha]):= [([\alpha], 1)].
$
For each $\rho\in \cF_{{X_S}}$, we define
\bea\label{qinv}
CS_{{X_S}}(\rho):=\iota(\mathcal{CS}_{{X_S}}(\rho))= \iota [\loc_S( \beta_\rho)] \in \pi_S^{-1}(\rho_S), \quad \rho_S = \loc_S( \rho)
\eea
where $\beta_\rho \in C^2(\Pi^S, \Z/p\Z) \mod B^2(\Pi^S, \Z/p\Z)$ is given by\footnote{By using global class field theory, one can show that $\mathcal{CS}_{{X_S}}(\rho)$ does not depend on a choice of $\beta_\rho$.
}
\begin{eqnarray}\label{betarho}
d(\beta_\rho) = c \circ \rho
\end{eqnarray}
due to the fact $H^3(\Pi^S, \Z/p\Z)=0$ (since $S$ contains all the prime ideals dividing $p$).

\begin{definition}[state vectors]
Let
\bea \label{fromglobal}
 \cF_{{X_S}}(\rho_S):=\{\rho \in \cF_{{X_S}}: \loc_S (\rho) =\rho_S\}, \quad \rho_S\in \cF_S.
\eea
We define the normalised quantum arithmetic Chern-Simons invariant $Z_{{X_S}} =\frac{\tilde Z_{{X_S}}}{||\tilde Z_{{X_S}}||}\in \cH_S$ with boundary $S$ as follows:
\bea\label{nvector}
\tilde Z_{{X_S}} (\rho_S) := \frac{1}{p}\sum_{\rho \in \cF_{{X_S}}(\rho_S)}{CS}_{{X_S}} (\rho)\in \pi_S^{-1}(\rho_S), \quad \rho_S \in \cF_S. 
\eea
We call $Z_{X_S}$ a normalised state vector in $\cH_S$.
\end{definition}

\subsection{The bipartite entanglement entropy and set-up of the problem}

We take a partition $S=S_1 \sqcup S_2$ where neither of $S_1$ nor $S_2$ is empty.
For a Hilbert space $\cH$, we denote by $\cH^1$ the set of norm 1 vectors. 
The bipartite entanglement entropy 
$$
\Ent:(\cH_{S_1} \otimes \cH_{S_2})^1 \to \R
$$ 
was defined in \eqref{entropy} as the von Neumann entropy of either of its reduced states; the result is independent of which one we pick, since they are of the same value (can be proved from the Schmidt decomposition of the state with respect to the bipartition). 
%

For a partition $S=S_1 \sqcup S_2$ above, the natural identification $\cF_S = \cF_{S_1} \times \cF_{S_2}$ induces a canonical isomorphism of Hilbert spaces
$$
\Theta_S^{S_1,S_2}: \cH_S \xrightarrow{\sim} \cH_{S_1} \otimes \cH_{S_2}.
$$
Using this isomorphism, we define the normalised vector in the Hilbert space $\cH_{S_1}\otimes \cH_{S_2}$,
\bea \label{vector}
Z_{X_{S_1,S_2}} = \Theta_S^{S_1,S_2} (Z_{{X_S}})\in \cH_{S_2} \otimes \cH_{S_2}, 
\eea
which was mentioned in \eqref{invector} of the introduction, whose entanglement entropy is our main interest.

\begin{remark}
In our finite arithmetic Chern-Simons theory, one may consider a more ``physical'' quantum abelian arithmetic Chern-Simons invariant using the Lagrangian $\loc_S(\cF_{X_S})$ of $\cF_S$ (Lemma \ref{lgr}): we may view $Z_{{X_S}}$ as an element of 
$\cH_S^{glob}:=\Gamma (\loc_S(\cF_{X_S}), {CS}_S)$
where 
\be
\tilde Z_{{X_S}} (\rho_S) :=\frac{1}{p} \sum_{\rho \in \cF_{{X_S}}(\rho_S)} CS_{{X_S}} (\rho).
\ee
Since $\loc_S(\cF_{X_S}) = \loc_{S_1}(\cF_{X_{S_1}}) \times \loc_{S_2}(\cF_{X_{S_2}})$, we have an induced identification
\be
\cH_S^{glob} \simeq \cH_{S_1}^{glob} \otimes \cH_{S_2}^{glob}
\ee
with respect to which one can examine entanglement entropy.
In this case we find the entanglement entropy always zero due to Lemmata \ref{trivial} and \ref{triviality}.
Thus we concentrate on the situation $Z_{X_{S_1,S_2}} \in \cH_{S_1} \otimes \cH_{S_2}$,
 where one regards $\cF_{S}$ as a discrete space as in \cite{FQ}.
\end{remark}

\subsection{A computational tool: an explicit trivialisation of the line bundle}

%

In order to compute entanglement entropies, we choose a section $x_S$ to $\varpi_S:\mathcal{CS}_S \to \cF_S$, i.e., a map $x_S:\cF_S \to \mathcal{CS}_S$ such that $\varpi_S \circ x_S=id$, which enables us to construct an explicit trivialisation of $\varpi_S:\mathcal{CS}_S \to \cF_S$: we consider 
$$
\mathcal{CS}_S^{x_S}:=\cF_S \times \Z/p\Z, \quad \varpi_S^{x_S}=pr_1:\mathcal{CS}_S^{x_S} \to \cF_S
$$
where $pr_1: \mathcal{CS}_S^{x_S} \to \cF_S$ is the projection to the first factor.
Then $\mathcal{CS}_S \simeq \mathcal{CS}_S^{x_S}$ are isomorphic as $\Z/p\Z$-torsors.
Under this isomorphism, \eqref{qinv} can be interpreted as follows:
\bea \label{pboundary}
\mathcal{CS}_{{X_S}}^{x_S} (\rho)=\sum_{i=1}^r \inv_{\frp_i} \big( \loc^S_{\frp_i}( \beta_\rho) - x_{\frp_i}(\loc^S_{\frp_i}(\rho))\big) \in \Z/p\Z, \quad \rho \in \cF_{{X_S}}.
\eea
We will sometimes use the notation $ \loc^S_{\frp_i}( \beta_\rho) - x_{\frp_i}(\loc^S_{\frp_i}(\rho))= \inv_{\frp_i} \big( \loc^S_{\frp_i}( \beta_\rho) - x_{\frp_i}(\loc^S_{\frp_i}(\rho))\big)$ for simplicity, when there is no chance of confusion.
We also associate a $\C$-line bundle ${CS}_S^{x_S}$ to  $\mathcal{CS}_S^{x_S}$:
$$
{CS}_S^{x_S}:=\mathcal{CS}_S^{x_S}\times_{\Z/p\Z} \C = \cF_S \times \C.
$$
Then there is an isomorphism $\Theta_S:{CS}_S \to {CS}_S^{x_S}$ as $\C$-line bundles over $\cF_S$.

A benefit of choosing a section $x_S$ to $\mathcal{CS}_S$ is that we can view sections of ${CS}_S^{x_S}$ as a function space, i.e.,
$$
\cH_S^{x_S}=\Gamma(\cF_S, {CS}_S^{x_S})=Map_G(\cF_S, \C).
$$
For any sections $x_S, x_S'$ of $\varpi_S$, there are isomorphisms of $\C$-vector spaces $\Theta_S^{x_S}:\cH_S \xrightarrow{\sim} \cH_S^{x_S}$ and $\Theta_S^{x_S, x_S'}:\cH_S^{x_S} \xrightarrow{\sim} \cH_S^{x_S'}$ such that $\Theta_S^{x_S, x_S'} \circ \Theta_S^{x_S} =\Theta_S^{x_S'}$, which implies that the Hermitian inner product on $\cH_S^{x_S}$ given by 
\begin{eqnarray}\label{HIP}
\langle f,g \rangle:=\sum_{x \in \cF_S}  f(x) \cdot \overline{g(x)}, \quad f,g \in Map_G(\cF_S,\C)=\cH_S^{x_S}
\end{eqnarray}
where $\overline{g(x)}$ is the complex conjugation of $g(x)$, transports to a canonical Hilbert space structure
on $\cH_S$ via $\Theta_S^{x_S}$. Let $||v||^2=\langle v, v \rangle$ for $v \in \cH_S^{x_S}$. 
Using a section $x_S$, \eqref{nvector} can be interpreted as
\bea\label{adwi}
\tilde Z_{{X_S}}^{x_S} (\rho_S)=\frac{1}{p} \sum_{\rho \in \cF_{{X_S}}(\rho_S)} \zeta_p^{\mathcal{CS}_{{X_S}}^{x_S}(\rho)} \in \C, \quad  Z_{{X_S}}^{x_S} =\frac{\tilde Z_{{X_S}}^{x_S} }{||\tilde Z_{{X_S}}^{x_S}||}.
\eea
Then $\tilde Z_{{X_S}}^{x_S} \in \cH_S^{x_S}$ and $\Theta_S^{x_S,x_S'} (\tilde Z_{{X_S}}^{x_S}) = \tilde Z_{{X_S}}^{x_S'}$.
We have a commutative diagram of isomorphisms of Hilbert spaces
\[\begin{tikzcd}
	{\cH_S} && {\cH_{S_1}\otimes \cH_{S_2}} \\
	\\
	{\cH_S^{x_S}} && {\cH_{S_1}^{x_{S_1}}\otimes \cH_{S_2}^{x_{S_2}}}
	\arrow["{\Theta_{S,x_S}^{S_1,S_2}}"', from=3-1, to=3-3]
	\arrow["{\Theta_{S}^{S_1,S_2}}", from=1-1, to=1-3]
	\arrow["{\Theta_S^{x_S}}"', from=1-1, to=3-1]
	\arrow["{\Theta_{S_1}^{x_{S_1}}\otimes \Theta_{S_2}^{x_{S_2}}}", from=1-3, to=3-3]
\end{tikzcd}\]
for any partition $S =S_1\sqcup S_2$ and any section $x_S$ of $\varpi_S:\mathcal{CS}_S \to \cF_S$ together with $x_{S_1}$ and $x_{S_2}$. 
Then we have
$$
\Ent(Z_{X_{S_1,S_2}}) =\Ent((\Theta_{S_1}^{x_{S_1}}\otimes \Theta_{S_2}^{x_{S_2}})(Z_{X_{S_1,S_2}})), \quad 
Z_{X_{S_1,S_2}} = \Theta_S^{S_1,S_2} (Z_{{X_S}}), \
Z_{{X_S}} \in \cH_S
$$
for any choice of $x_{S_1}$ and $x_{S_2}$. 
In other words, the entanglement entropy does not depend on the choices of sections $x_{S_1}$ and $x_{S_2}$. 
Our strategy is to find a suitable section $x_S$ so that 
$\Ent((\Theta_{S_1}^{x_{S_1}}\otimes \Theta_{S_2}^{x_{S_2}})(Z_{X_{S_1,S_2}}))$ can be computed in a simple way.

%


%

\section{Explicit computations}

\subsection{Preliminaries}
Note that the $\bF_p$-vector space $\cF_v$ is equipped with a symplectic paring which is given by the local duality theorem of Tate:
\[ \begin{tikzcd}
\cF_v \arrow{d}{=} &\times& \cF_v \arrow{r}{} \arrow{d}{\simeq} & \arrow{d}{} \bF_p \arrow{d}{\inv_v^{-1}} &  \\%
H^1(\Pi_v, \Z/p\Z) &\times& H^1(\Pi_v, \mu_p) \arrow{r}{} & H^2(\Pi_v,\Z/p\Z) \simeq H^2(\Pi_v, \mu_p). &  
\end{tikzcd}
\]

In the following lemma, $S$ can be any finite set of primes of ${F}$.
\begin{lem} \label{lgr}
Let $\cF_v^{ur}$ be the unramified cohomology subgroup of $\cF_v$.
 
(1)
The image $\loc_S(\cF_{{X_S}})$ is a Lagrangian subspace of $\cF_S=\prod_{v\in S} \cF_v$. 

(2) If $v \in S$ and $v$ does not divide $p$, then $\cF_v^{ur}$ is a Lagrangian subspace of $\cF_v$.
\end{lem}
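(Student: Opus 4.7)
The plan is to prove the purely local statement (2) directly, and then deduce (1) from it via the Poitou--Tate nine-term exact sequence, using the hypothesis $\mu_{p^2}\subset F$ to identify $\Z/p\Z$ with $\mu_p$ as Galois modules.

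First I would record that under the assumption $\mu_{p^2}\subset F$, each $\cF_v$ carries a non-degenerate alternating pairing: local Tate duality provides a perfect pairing $\cF_v\times H^1(\Pi_v,\mu_p)\to \Z/p\Z$ (the diagram preceding the lemma), and the inclusion $\mu_p\subset F_v$ gives a Galois-equivariant isomorphism $\Z/p\Z\simeq\mu_p$ transporting this to a pairing $\cF_v\times\cF_v\to \Z/p\Z$. Graded-commutativity of cup product makes the resulting pairing skew, and it is alternating because $-1\in (F_v^\times)^p$ (automatic for odd $p$ from $\mu_p\subset F_v$; for $p=2$ it follows from $\mu_4\subset F_v$). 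So the pairing really is symplectic, and the notion of Lagrangian is well-defined.

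For part (2), fix $v\in S$ with $v\nmid p$. Local Tate duality gives $\dim_{\Z/p\Z} H^2(\Pi_v,\Z/p\Z)=\dim_{\Z/p\Z}\mu_p(F_v)=1$, and the local Euler--Poincar\'e formula $\chi(\Pi_v,\Z/p\Z)=0$ (valid away from the residue characteristic) yields $\dim_{\Z/p\Z}\cF_v=2$. The unramified part $\cF_v^{ur}=\Hom(\Gal(F_v^{ur}/F_v),\Z/p\Z)\simeq \Z/p\Z$ has dimension $1$. Isotropy is standard: a cup product of two unramified classes lies in $H^2_{ur}(\Pi_v,\mu_p)\simeq H^2(\widehat\Z,\mu_p)=0$. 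Half-dimensional and isotropic in a symplectic space is precisely Lagrangian.

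For part (1), I would invoke the middle row of the Poitou--Tate nine-term exact sequence,
$$
H^1(\Pi^S,\Z/p\Z)\xrightarrow{\loc_S}\prod_{v\in S}H^1(\Pi_v,\Z/p\Z)\xrightarrow{\Phi}H^1(\Pi^S,\mu_p)^{*},
$$
where $\Phi$ is the Pontryagin dual of the analogous localisation map for $\mu_p$ composed with the sum of local Tate pairings. Exactness identifies $\loc_S(\cF_{X_S})$ with the annihilator, inside $\cF_S$, of the image of $H^1(\Pi^S,\mu_p)\to\prod_{v\in S}H^1(\Pi_v,\mu_p)$. Since the isomorphism $\Z/p\Z\simeq\mu_p$ is Galois-equivariant, this latter image is identified with $\loc_S(\cF_{X_S})$ itself, so $\loc_S(\cF_{X_S})=\loc_S(\cF_{X_S})^{\perp}$; combined with non-degeneracy of the pairing on $\cF_S$, this forces $\dim\loc_S(\cF_{X_S})=\tfrac12\dim_{\Z/p\Z}\cF_S$, yielding the Lagrangian property. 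The main thing to be careful about is the compatibility of the chosen isomorphism $\Z/p\Z\simeq\mu_p$ with the various local and global Tate pairings appearing in Poitou--Tate; once this compatibility is pinned down, the remainder of the argument is formal.
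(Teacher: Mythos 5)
Your proof is correct and follows essentially the same route as the paper: part (1) via exactness of the Poitou--Tate sequence at $\prod_{v\in S}H^1(\Pi_v,\Z/p\Z)$ (the images of the two global localisations being exact annihilators, identified with each other via $\Z/p\Z\simeq\mu_p$), and part (2) via local Tate duality, the Euler--Poincar\'e formula, and the inflation--restriction identification of the unramified subgroup. The paper merely cites these facts, so your write-up simply supplies the details it leaves implicit.
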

\begin{proof}
\emph{(1)} follows from the Tate-Poitou exact sequence \cite[Theorem 4.10]{Mil}.

\emph{(2)} is well-known; it follows from the inflation-restriction exact sequence, the local duality, and the Euler-Poincar\'e characteristic formula.
\end{proof}


We recall the definition of the $p$-Selmer group with respect to $S$:
$$
Sel^S({F}, \Z/p\Z) :=\Ker \left(H^1({F}, \Z/p\Z) \to \prod_{v \notin S} H^1({F}_v, \Z/p\Z)/H^1_{ur}({F}_v, \Z/p\Z) \right)
$$
where $H^1_{ur}({F}_v, \Z/p\Z)=H^1(\pi_v,\Z/p\Z)$ is the unramified cohomology subgroup of $H^1({F}_v, \Z/p\Z)$. 
Let $U=\cO_{{F}}^\times$ and  $U_S=\cO_{{F}}[1/S]^\times$. 
\begin{definition}
Define $Cl({X_S})=H^1({X_S}, \mathbb{G}_m)$ to be the class group of ${X_S}$, which is isomorphic to the quotient of the usual ideal class group $Cl(X)$ of ${F}$ by the subgroup generated by the classes of all prime ideals in $S$ (see Proposition 8.3.11. (ii), \cite{NSW}).

\end{definition}

Let $A[p]$ denote the $p$-torsion of an abelian group $A$. Then the Kummer theory gives us the following commutative diagram:
\[\begin{tikzcd}
	{{F}^\times/({F}^\times)^p} && {H^1(F,\mu_p)\simeq H^1({F}, \Z/p\Z)=\Hom(\Gal(\overline{{F}}/{F}), \Z/p\Z)} \\
	\\
	{U_S/U_S^p} && {Sel^S({F},\Z/p\Z)\simeq \cF_{{X_S}}} && {Cl({X_S})[p]} 
	\arrow[hook, from=3-1, to=3-3]
	\arrow[two heads, from=3-3, to=3-5]
	\arrow[hook, from=3-1, to=1-1]
	\arrow[hook, from=3-3, to=1-3]
	\arrow["\simeq", from=1-1, to=1-3]
\end{tikzcd}\]
where the first row is the Kummer isomorphism, an isomorphism ${Sel^S({F},\Z/p\Z)\simeq \cF_{{X_S}}}$ is given in \cite[Lemma5.3]{Rubin} (since $S$ contains all the prime ideals dividing $p$), and we refer to the proof of \cite[(8.7.4)]{NSW} for the exact sequence of the second row.



\subsection{Choices of sections in the case $Cl({X_S})[p]=0$} \label{sub3.2}

We assume that $Cl({X_S})[p]=0$, which implies that the Kummer map
$\kappa_p: U_S/U_S^{p} \simeq \cF_{{X_S}}$ is isomorphic.


%

We construct a section $x_{\frp_i}$ to $\varpi_{\frp_i}:\mathcal{CS}_{\frp_i} \to \cF_{\frp_i}$ using the splitting $s$ given in \eqref{cchoice} and the Kummer isomorphism:
we define 
\begin{eqnarray} \label{localsection}
x_{\frp_i} (\rho_{\frp_i}) = -\rho_{\frp_i} \cup (s \circ \rho_{\frp_i} - \kappa_{p^2} ^{\frp_i}(\tilde u_{\rho_{\frp_i}})), \quad \rho_{\frp_i} \in H^1(\Pi_{\frp_i}, \Z/p\Z)
\end{eqnarray}
where 
$$
\kappa_{p^2} ^{\frp_i}: {F}_{\frp_i}^\times /( {F}_{\frp_i}^\times)^{p^2} \simeq H^1(\Pi_{\frp_i}, \Z/p^2\Z)
$$ 
is the local Kummer isomorphism and $\tilde u_{\rho_{\frp_i}} \in {F}_{\frp_i}^\times /( {F}_{\frp_i}^\times)^{p^2}$ is chosen such that
$$
\kappa_{p^2} ^{\frp_i}( \tilde u_{\rho_{\frp_i}} )(\g) \equiv \rho_{\frp_i}(\g) \mod p, \quad \g \in \Pi_{\frp_i}.
$$ 
Then $d \big( x_{\frp_i} (\rho_{\frp_i}) \big) = c \circ \rho_{\frp_i}$.
Let $U_S$ be the group of global $S$-units of ${F}$.
Then we have the global Kummer isomorphism (due to the assumption $Cl({X_S})[p]=0$)
$$
\kappa_p: U_S/(U_S)^{p} \simeq H^1(\Pi^S, \Z/p\Z)=\Hom(\Pi^S, \Z/p\Z)=:\cF_{{X_S}}
$$

Using the global Kummer isomorphism (using the running assumption $\mu_{p^2} \subset F$)
$$
\kappa_{p^2}: U_S/(U_S)^{p^2} \simeq H^1(\Pi^S, \Z/p^2\Z),
$$
for given $\rho \in H^1(\Pi^S, \Z/p\Z)$ we choose $\tilde u_\rho \in U_S/(U_S)^{p^2}$ such that $\kappa_{p^2} (\tilde u_\rho) (\g) \equiv \rho(\g) \mod p$ for $\g \in \Pi^S$.
Let 
\begin{eqnarray}\label{globalsection}
\beta_\rho= -\rho \cup (s \circ \rho - \kappa_{p^2}(\tilde u_\rho)) \in C^2(\Pi^S, \Z/p\Z), \quad \rho \in H^1(\Pi^S, \Z/p\Z).
\end{eqnarray}
Then it satisfies \eqref{betarho}.


The Hilbert space $\cH_S^{x_S} =Map_G(\cF_S, \C)$ is a finite dimensional $\C$-vector space of dimension $|\cF_S|$. 
Let $\delta_{\rho_S}$ be the delta function of $\rho_S$ on $\cF_S$ and $\{\delta_{\rho_S}: \rho_S \in \cF_S\}$ forms an orthonormal $\bC$-basis of $\cH_S^{x_S}$ with respect to the Hermitian inner product \eqref{HIP}. 
Then
\begin{eqnarray*}
\tilde Z_{{X_S}}^{x_S}&=& \sum_{\rho_S \in \cF_S} \tilde Z_{{X_S}}^{x_S} (\rho_S) \cdot \delta_{\rho_S}
=\sum_{\rho_S \in \cF_S}\frac{1}{p} \sum_{\rho \in \cF_{{X_S}}(\rho_S)} \zeta_p^{\mathcal{CS}_{{X_S}}^{x_S}(\rho)}\cdot \delta_{\rho_S} \in \cH_S^{x_S}  
 \\
\tilde Z_{X_{S_1,S_2}}^{x_S}&=&\Theta_{S,x_S}^{S_1, S_2}\big( \tilde Z_{{X_S}}^{x_S} \big)= \sum_{\rho_S=(\rho_{S_1}, \rho_{S_2}) \in  \cF_S} \tilde Z_{{X_S}}^{x_S} (\rho_S) \cdot  \delta_{\rho_{S_1}} \otimes \delta_{\rho_{S_2}}
\end{eqnarray*}
where 
$
\Theta_{S,x_S}^{S_1, S_2}: \cH_S^{x_S} \xrightarrow{\simeq} \cH_{S_1}^{x_{S_1}} \otimes \cH_{S_2}^{x_{S_2}}
$
is a Hilbert space isomorphism sending $\delta_{\rho_S}$ to $\delta_{\rho_{S_1}} \otimes \delta_{\rho_{S_2}}$ and $\tilde Z_{{X_S}}^{x_S} (\rho_S)$ was given in \eqref{adwi}.

We have a formula for \eqref{pboundary} using $\beta_{\rho}$ and $x_{\frp_i}$: for $\rho \in \cF_{{X_S}}(\rho_S)$
\begin{eqnarray*}
\mathcal{CS}_{{X_S}}^{x_S} (\rho)
&=& \sum_{i=1}^r \big(\loc^S_{\frp_i}(\beta_\rho) - x_{\frp_i} (\loc^S_{\frp_i}(\rho)) \big)\\
&=&  \sum_{i=1}^r  \rho_{\frp_i} \cup \big(\loc^S_{\frp_i}( \kappa_{p^2}(\tilde u_\rho)) - \kappa_{p^2}^{\frp_i}(\tilde u_{\rho_{\frp_i}}) \big).
\end{eqnarray*}

Thus the difference between the global Kummer lifting and the local Kummer lifting (to $\mu_{p^2}$) plays a key role in the computation of $Z_{{X_S}}^{x_S}(\rho_S)$.

\subsection{The entanglement entropy formula in the case $Cl({X_S})[p]=0$ and $|S|=2$} \label{subsec3.3}

In this subsection we assume that $Cl({X_S})[p]=0$ and $|S|=2$.
The formula and its verification under this assumption serve as a good guide to the general case (Theorem \ref{gthm}) both conceptually and technically.

Let $\frp_1$ be a prime ideal of ${F}$ dividing $p$, which is inert, and $\frp_2$ be a prime ideal of ${F}$ not dividing $p$.  Let $S=\{\frp_1, \frp_2\}$.

\begin{theorem}\label{mt}
Assume $Cl({X_S})[p]=0$ and $S=\{ \frp_1, \frp_2\}$. 
The entanglement entropy is given by
$$
\Ent(Z_{X_{\frp_1,\frp_2}})=\biggr(\operatorname{Rank}(\loc^S_{\frp_2})-\operatorname{Nullity}(\loc^S_{\frp_1})\biggl)\log p.
$$
\end{theorem}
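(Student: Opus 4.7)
The plan is to compute the entanglement entropy by determining the singular value decomposition of the coefficient matrix of $Z_{X_{\frp_1,\frp_2}}$. First I would work in the explicit trivialisation $x_S$ of Section~\ref{sub3.2}, in which the unnormalised state takes the form
$$\tilde Z^{x_S}_{X_S} = \frac{1}{p}\sum_{\rho \in \cF_{X_S}} \zeta_p^{\mathcal{CS}^{x_S}_{X_S}(\rho)}\, \delta_{\loc^S_{\frp_1}(\rho)} \otimes \delta_{\loc^S_{\frp_2}(\rho)}.$$
Using the formulas \eqref{globalsection} and \eqref{localsection} for $\beta_\rho$ and $x_{\frp_i}$, I would show that $\mathcal{CS}^{x_S}_{X_S}$ is a quadratic function on $\cF_{X_S}$ with the key property that for $\eta \in W := \Ker(\loc_S)$ and any $\rho \in \cF_{X_S}$ with $\rho_S := \loc_S(\rho)$,
$$\mathcal{CS}^{x_S}_{X_S}(\rho + \eta) = \mathcal{CS}^{x_S}_{X_S}(\rho) + L_{\rho_S}(\eta),$$
where $L_{\rho_S} : W \to \Z/p\Z$ is a linear character depending only on $\rho_S$. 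Its well-definedness with respect to choices of Kummer lifts to $\mu_{p^2}$ reduces to global reciprocity ($\sum_v \inv_v = 0$).

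Next, character orthogonality in the inner fibre sum yields
$$\tilde Z^{x_S}_{X_S}(\rho_S) = \tfrac{|W|}{p}\,\zeta_p^{\mathcal{CS}^{x_S}_{X_S}(\rho_0)} \cdot \mathbf{1}\bigl[L_{\rho_S}|_W = 0\bigr]$$
for any lift $\rho_0$ of $\rho_S$, so the support of the state is the subspace $L' := \{\rho_S \in L : L_{\rho_S}|_W = 0\} \subseteq L := \loc_S(\cF_{X_S})$, on which the nonzero values have constant modulus. The crucial identification is that the bilinear pairing $L \times W \to \Z/p\Z$ defined by $(\rho_S, \eta) \mapsto L_{\rho_S}(\eta)$ matches the Poitou--Tate pairing refined through the $\Z/p^2\Z$-Kummer lifting, and I would use exactness in the Poitou--Tate nine term sequence together with the running assumption $Cl(X_S)[p] = 0$ to show that the radical $L'$ has codimension $\dim W$ in $L$ and that its projections to $\cF_{\frp_1}$ and $\cF_{\frp_2}$ remain $L_1 := \loc^S_{\frp_1}(\cF_{X_S})$ and $L_2 := \loc^S_{\frp_2}(\cF_{X_S})$ respectively.

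Finally, a coset argument gives the singular value decomposition of the resulting coefficient matrix: fixing $\rho_2 \in L_2$, the $\rho_2$-column is supported on a coset of the projection of $L' \cap (\cF_{\frp_1} \oplus 0)$ into $\cF_{\frp_1}$, and the Schmidt rank equals the number of distinct such cosets as $\rho_2$ varies in $L_2$. After checking that the phase factor $\zeta_p^{\mathcal{CS}^{x_S}_{X_S}(\rho_0(\rho_S))}$ does not collapse columns lying in the same coset, the Schmidt rank comes out to $p^{\dim L_1 + \dim L_2 - \dim \cF_{X_S}}$ with all nonzero singular values equal, so the entropy is $(\dim L_1 + \dim L_2 - \dim \cF_{X_S})\log p$. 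Applying the rank--nullity identity $\operatorname{Nullity}(\loc^S_{\frp_1}) = \dim \cF_{X_S} - \dim L_1$ then yields the claimed formula.

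The hard part will be (i) the duality identification of $L'$ in the middle paragraph and (ii) verifying that the Chern-Simons phase does not introduce extra linear dependencies between columns lying in the same coset. Both hinge on a fine analysis of the mismatch between the global Kummer lift $\tilde u_\rho \in U_S/U_S^{p^2}$ and the local Kummer lifts $\tilde u_{\rho_{\frp_i}} \in F_{\frp_i}^\times/(F_{\frp_i}^\times)^{p^2}$, whose difference is controlled by the Bockstein together with Tate local duality at each prime of $S$.
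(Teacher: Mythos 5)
There is a genuine gap: the two items you yourself flag as ``the hard part'' are precisely the substance of the proof, and neither is carried out. The heart of the matter is your step (ii): for the claimed Schmidt structure (rank $p^{\dim L_1+\dim L_2-\dim\cF_{X_S}}$ with \emph{equal} singular values) you need the phases $\zeta_p^{\mathcal{CS}^{x_S}_{X_S}(\rho)}$ to be constant along each column class; if they vary, the rank goes up and the singular values spread out, so the formula fails. The paper settles this by an explicit computation (Lemma \ref{trivial}): with the sections \eqref{localsection} and the global cochain \eqref{globalsection}, the invariant is $\mathcal{CS}^{x_S}_{X_S}(\rho)=\sum_i \rho_{\frp_i}\cup\bigl(\loc^S_{\frp_i}(\kappa_{p^2}(\tilde u_\rho))-\kappa_{p^2}^{\frp_i}(\tilde u_{\rho_{\frp_i}})\bigr)$, and the hypothesis $Cl(X_S)[p]=0$ gives the Kummer isomorphism $U_S/U_S^{p}\simeq\cF_{X_S}$, so one may take each local lift $\tilde u_{\rho_{\frp_i}}$ to be the localisation of the global $S$-unit lift $\tilde u_\rho$; then the bracket vanishes and the phase is identically $1$. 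After that, every nonzero entry equals $1/p$ and the singular value decomposition is elementary linear algebra over the subspace $\loc_S(\cF_{X_S})\subset\cF_{\frp_1}\times\cF_{\frp_2}$. Your proposal never establishes this vanishing (nor any substitute control of the phases), so the decisive step is missing.

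Second, the middle paragraph is aimed at the wrong structure. Under the standing hypothesis, $W=\Ker(\loc_S)=\Ker(\loc^S_{\frp_1})\cap\Ker(\loc^S_{\frp_2})\simeq\Hom(Cl(X_S),\Z/p\Z)=0$, so $\loc_S$ is injective, each fibre $\cF_{X_S}(\rho_S)$ is a single point, and the whole apparatus of character orthogonality over $W$, the pairing $L\times W\to\Z/p\Z$, and the ``radical'' $L'$ is vacuous; the paper uses this injectivity directly. Worse, your claim that in general the support would be a subspace $L'$ of codimension $\dim W$ in $L=\loc_S(\cF_{X_S})$ is inconsistent with the paper's Lemma \ref{triviality} (used for Theorem \ref{gthm}), which shows that $\tilde Z^{x_S}_{X_S}$ takes one and the same nonzero value on \emph{all} of $\loc_S(\cF_{X_S})$; so the Poitou--Tate identification you propose in step (i) would have to prove $L'=L$, not a codimension-$\dim W$ statement. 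The combinatorial endgame of your proposal (counting column-support cosets, giving exponent $t_2-s_1$, and converting via rank--nullity) does match the paper's computation, but it is only valid once the phase triviality of Lemma \ref{trivial} is in place.
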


Note that the Artin reciprocity map provides an isomorphism $Cl(X)\xrightarrow{\substack{rec \\ \sim}} \Pi^{ab}=\Gal({F}^{ur,ab}/{F})$ 
where ${F}^{ur,ab}$ is the maximal unramified abelian extension of $F$.
Also note that 
\be
\Ker(\loc^S_{\frp_1}) \cap \Ker(\loc^S_{\frp_2})=\Hom(\Pi^{ur,ab}_S, \Z/p\Z)
\ee
 where $\Pi^{ur,ab}_S=\Gal({F}^{ur,ab}_S/{F})$ and
${F}^{ur,ab}_S$ is the maximal unramified abelian extension of ${F}$ in which $S$ splits completely, and thus $\Pi^{ur,ab}_S$ is a quotient of $\Pi^{ab}$.
Moreover, one can check that $Cl({X_S})$ is isomorphic to $\Pi^{ur,ab}_S$ as finite abelian groups. Therefore the assumption $Cl({X_S})[p]=0$ implies that
$\loc_S=(\loc^S_{\frp_1},\loc^S_{\frp_2}): \cF_{{X_S}} \to \cF_S=\cF_{\frp_1} \times \cF_{\frp_2}$ is injective.
Let
\be
s_1=\operatorname{Nullity}(\loc^S_{\frp_1}) \quad \text{ and } \quad
t_2=\operatorname{Rank}(\loc^S_{\frp_2}).
\ee
Then we have $0\leq t_2 \leq 2$ and $t_2-s_1 \geq 0$ (due to the injectivity of $\loc_S$).
Theorem \ref{mt} says that the more degenerate the restriction maps $\loc^S_{\frp_1}$ and $\loc^S_{\frp_2}$ are, the smaller the entanglement entropy is.


\begin{lemma}\label{trivial}
If $Cl({X_S})[p]=0$, $|S|=2$ and we choose $x_S$ as in \eqref{localsection}, then  
\be
\tilde Z_{{X_S}}^{x_S} (\rho_S)=\left\{
  \begin{array}{@{}ll@{}}
    \frac{1}{p}, & \text{if}\ \text{$\cF_{{X_S}}(\rho_S)$ is non-empty} \\
    0, & \text{otherwise}
  \end{array}\right.
\ee
where $\cF_{{X_S}}(\rho_S)$ is defined in \eqref{fromglobal}.
\end{lemma}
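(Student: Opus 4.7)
The plan is to analyze the two cases of the claimed piecewise formula separately. When $\cF_{{X_S}}(\rho_S)$ is empty, the sum in \eqref{adwi} defining $\tilde Z_{{X_S}}^{x_S}(\rho_S)$ contains no terms, giving $0$ immediately. For the nonempty case, I first invoke the injectivity of $\loc_S:\cF_{{X_S}}\to\cF_S$, which follows from the assumption $Cl({X_S})[p]=0$ via the Kummer-theoretic diagram in the preceding subsection (as made explicit in the commentary preceding Theorem \ref{mt}). This injectivity forces $\cF_{{X_S}}(\rho_S)$ to be a singleton $\{\rho\}$, so the sum collapses to a single term $\tilde Z_{{X_S}}^{x_S}(\rho_S)=(1/p)\zeta_p^{\mathcal{CS}_{{X_S}}^{x_S}(\rho)}$, and the lemma reduces to proving $\mathcal{CS}_{{X_S}}^{x_S}(\rho)\equiv 0\pmod p$.

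For this reduction, I would work with the explicit cup-product formula derived just before Subsection \ref{subsec3.3}, which expresses $\mathcal{CS}_{{X_S}}^{x_S}(\rho)$ as $\sum_{\frp\in S}\inv_\frp\bigl(\rho_\frp\cup(\loc^S_\frp(\kappa_{p^2}(\tilde u_\rho))-\kappa_{p^2}^{\frp}(\tilde u_{\rho_\frp}))\bigr)$. Since both terms inside the parenthesis are mod-$p^2$ Kummer lifts of the same mod-$p$ class $\rho_\frp$, their difference takes values in $p\Z/p^2\Z$; via the identification $p\Z/p^2\Z\cong\Z/p\Z$, the cup product translates into a local Hilbert symbol $(u_\rho,a_\frp)_\frp$ with $a_\frp^p\equiv\loc(\tilde u_\rho)/\tilde u_{\rho_\frp}\pmod{(F_\frp^\times)^{p^2}}$. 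The core of the argument then rests on exploiting the freedom in the choice of the local lift $\tilde u_{\rho_\frp}$ in \eqref{localsection}: the prescription $\tilde u_{\rho_\frp}:=\loc(\tilde u_\rho)$ makes each $a_\frp$ trivial and hence forces $\mathcal{CS}_{{X_S}}^{x_S}(\rho)=0$ as required.

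The main obstacle will be reconciling this $\rho$-adapted choice with the fact that the local section $x_{\frp_i}$ in \eqref{localsection} is nominally a function of $\rho_{\frp_i}$ alone. This is addressed by two observations: first, the injectivity of $\loc_S$ guarantees that each point of the image $\loc_S(\cF_{{X_S}})$ corresponds to a unique global $\rho$, so the assignment $\tilde u_{\rho_\frp}:=\loc(\tilde u_\rho)$ is unambiguous on this subset of $\cF_S$; second, the entanglement entropy---the ultimate target of the computation in Theorem \ref{mt}---is independent of the choice of trivialising section (by the commutative diagram in the preceding section), so extending the prescription arbitrarily to the remainder of $\cF_S$ causes no harm. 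An alternative route would be to establish the vanishing of the Hilbert symbol sum directly via Poitou--Tate reciprocity applied to the global $S$-unit $u_\rho$, but the freedom-of-choice argument above aligns more naturally with the section-based formalism already set up.
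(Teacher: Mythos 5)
Your proposal follows essentially the same route as the paper's proof: the empty case is immediate from the definition \eqref{adwi}, the assumption $Cl({X_S})[p]=0$ gives injectivity of $\loc_S$ so that the sum collapses to a single term, and the exponent $\mathcal{CS}_{{X_S}}^{x_S}(\rho)$ is made to vanish by taking the local mod-$p^2$ Kummer lift $\tilde u_{\rho_{\frp_i}}$ to be the localization of the global lift $\tilde u_\rho$ --- precisely the paper's ``choose both global and local Kummer liftings uniformly.'' The only caveat is that your patch for well-definedness is slightly off: injectivity of the joint map $\loc_S$ does not make the prescription $\rho_{\frp_i}\mapsto \loc^S_{\frp_i}(\kappa_{p^2}(\tilde u_\rho))$ unambiguous at a single prime, since $\loc^S_{\frp_i}$ alone may have nontrivial kernel (the cases $s_1\geq 1$ in Theorem \ref{mt}); the paper's own proof treats this compatibility just as implicitly, so your argument is at the same level of rigor as the published one rather than below it.
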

\begin{proof}
If $\cF_{{X_S}}(\rho_S)$ is empty, the result is clear. So we concentrate on the case that $\cF_{{X_S}}(\rho_S)$ is non-empty.
For given such $\rho_S$, there is a unique $\rho \in \cF_{X_S}$ such that $\loc_S(\rho)=\rho_S$, because $\loc_S$ is injective. The computation in Subsection \ref{sub3.2} says that
\be
 \tilde Z_{{X_S}}^{x_S} (\rho_S) 
&=&\frac{1}{p} \sum_{\rho \in \cF_{{X_S}}(\rho_S)} \zeta_p^{\sum_{i=1}^2 \rho_{\frp_i} \cup \big(\loc^S_{\frp_i}( \kappa_{p^2}(\tilde u_\rho)) - \kappa_{p^2}^{\frp_i}(\tilde u_{\rho_{\frp_i}}) \big)}\\
&=&\frac{1}{p} \cdot  \zeta_p^{\sum_{i=1}^2 \rho_{\frp_i} \cup \big(\loc^S_{\frp_i}( \kappa_{p^2}(\tilde u_\rho)) - \kappa_{p^2}^{\frp_i}(\tilde u_{\rho_{\frp_i}}) \big)}
\quad (\text{$\loc_S$ is injective and $\loc_S(\rho)=\rho_S$}).\\
\ee
Since the assumption $Cl({X_S})[p]=0$ (which implies that the Kummer map
$\kappa_p: U_S/U_S^{p} \simeq \cF_{{X_S}}$ is isomorphic) enables us to choose both global and local Kummer liftings uniformly, we have
\be
\loc^S_{\frp_i}( \kappa_{p^2}(\tilde u_\rho)) - \kappa_{p^2}^{\frp_i}(\tilde u_{\rho_{\frp_i}})=0,
\ee
which implies that $ \tilde Z_{{X_S}}^{x_S} (\rho_S) =1/p$.
\end{proof}

\begin{proof}[Proof of Theorem \ref{mt}]

According to \cite[Corollary (7.3.9)]{NSW}, we have $\bF_p$-vector space isomorphisms
\be
U_S/U_S^p \simeq \cF_{X_S} &\simeq& \bF_p^\mu, \quad \mu=2+\frac{[{F}:\Q]}{2}, \\
\cF_{\frp_1}&\simeq& \bF_p^{\mu_1}, \quad \mu_1=2+[{F}:\Q], \\
\cF_{\frp_2}&\simeq& \bF_p^{\mu_2}, \quad \mu_2=2.
\ee

Recall that we view 
$Z^{x_S}_{X_{\frp_1,\frp_2}}(\rho_{\frp_1},\rho_{\frp_2})$ as $|\cF_{\frp_1}| \times |\cF_{\frp_2}|$ matrix $A=A_{\rho_{\frp_1}}^{\rho_{\frp_2}}$ using the bases $\{\delta_{\rho_{\frp_i}} : \rho_{\frp_i} \in \cF_{\frp_i} \}, i =1, 2$: the $(\rho_{\frp_1}, \rho_{\frp_2})$-entry of $A$ is $\frac{1}{p} \sum_{\rho \in \cF_{{X_S}}(\rho_S)}\zeta_p^{\mathcal{CS}_{{X_S}}^{x_S} (\rho)}$ where $\rho_S=(\rho_{\frp_1}, \rho_{\frp_2}).$
So $A$ is a $p^{2+n}$ by $p^2$ matrix, where $n=[{F}:\Q]$. Because $2 \geq t_2 > 0$ and $t_2-s_1 \geq 0$ (the injectivity of $\loc_S:\cF_{{X_S}} \to \cF_S$), there are only 5 cases to consider.\footnote{The case $t_2=0$ can not happen due to Lemma \ref{lgr} (1), since $\cF_{X_{\frp_2}}$ injects into $\cF_{{X_S}}$ and the image $\loc_{\frp_2}(\cF_{X_{\frp_2}})$ becomes a Lagrangian of $\cF_{\frp_2} \simeq \bF_p^2$.}
Moreover, we observe that the rank of $A$ is $p^{t_2-s_1}$ and the inner product \eqref{HIP} gives rise to a norm $||A||=\sqrt{\sum_{i,j}|a_{ij}|^2}$ of $A$, where $a_{ij}$ is the $(i,j)$-entry of $A$. 

If, for a given local representation $\rho_S=(\rho_{\frp_1}, \rho_{\frp_2})$, there were no global $\rho$ such that $\loc_S(\rho)=\rho_S$, then the corresponding matrix entry of $A_{\rho_{\frp_1}}^{\rho_{\frp_2}}$ is zero. Since $\loc_S$ is injective, only $p^{2+\frac{n}{2}}$-entries of the $p^{2+n} \times p^{2}$-matrix $A$, corresponding to local representations which factor through $\Pi^S$, are non-zero. By Lemma \ref{trivial}, each such entry is $\frac{1}{p}$.

%

In order to compute the entanglement, we need to express $Z^{x_S}_{X_{\frp_1,\frp_2}} $ in the form (using the Schmidt decomposition based on the singular value decomposition)
$$
\tilde Z^{x_S}_{X_{\frp_1,\frp_2}} = \sum_{\rho_S=(\rho_{\frp_1}, \rho_{\frp_2}) \in  \cF_S} A_{\rho_{\frp_1}}^{\rho_{\frp_2}} \cdot  \delta_{\rho_{\frp_1}} \otimes \delta_{\rho_{\frp_2}}= \sum_{i=1}^{{\operatorname{Rank}(A)}} \lambda_i \cdot e_i \otimes f_i, \quad \lambda_i \in \C
$$
where $\{e_1, \cdots, e_s\} \subset \cH_{\frp_1}^{x_{\frp_1}}$ and $\{f_1, \cdots, f_s\} \subset \cH_{\frp_2}^{x_{\frp_2}}$ are orthonormal sets and $s$ is the rank of $A$.
Let $A^t$ be the transpose of $A$. 
Now we compute the $p^2 \times p^2$ matrix $A^{t} A$ and find the nonzero eigenvalues of $A^{t}A$: the $\lambda_i's$ are given by square roots of non-zero eigenvalues of $A^t A$.
By the singular value decomposition there are unitary (in fact, orthogonal since $A$ has entries in real numbers) matrices $U$ (a $p^{2+n} \times p^{2+n}$ matrix) and $V$ (a $p^2\times p^2$ matrix) such that
$A = U \Sigma V^t $
where $\Sigma$ is a $p^{2+n}\times p^2$ matrix whose $(i,i)$-entry is the singular value $\lambda_i$ (where $ 1 \leq i \leq \operatorname{Rank}(A)$) and other entries are all zeros.
Then $\{e_i\}$ (respectively $\{f_i\}$) consists of the first $s$ column vectors of $U$ (respectively $V$).
Then the entanglement entropy is given by
$$
\Ent(Z_{X_{\frp_1,\frp_2}})=\Ent(Z^{x_S}_{X_{\frp_1,\frp_2}})=\sum_{i=1}^{\operatorname{Rank}(A)} -\frac{|\lambda_i|^2}{||A||^2} \log \frac{|\lambda_i|^2}{||A||^2}.
$$

We compute the matrix $A$ (up to permutation of the basis $\delta_{\rho_{\frp_i}}$), $A^tA$, and the entanglement entropy using the above facts in all possible 5 cases.
Note that the entanglement entropy does not depend on such a choice of a basis and $||A||^2=\frac{1}{p^2} p^\mu=p^{\mu-2}=p^{n/2}$.

\begin{itemize}
\item ($s_1=0, t_2=2$: the rank of $A$ is $p^2$ and $\mu=t_1=s_2+2$)
We compute
{\tiny{
$$
A= \begin{pmatrix}
1/p & 0 &  &  & 0 \\
\vdots & 0 &  &  &  \\
1/p & 0 &  &  &  \\
0 & 1/p & 0 &  & \vdots \\
0 & \vdots & 0 &  &  \\
0 & 1/p & 0 & \cdots &  \\
 & 0 & 1/p & \ddots & 0 \\
\vdots &  &  & 0 & 1/p \\
 &  &  & 0 & \vdots \\
0 &  &  & 0 & 1/p \\
0& & \cdots & 0 & 0 \\
0& & \cdots & 0 & \vdots 
\end{pmatrix}, \quad \quad
\frac{A^tA}{||A||^2}= \begin{pmatrix}
p^{-t_2} & 0 & 0 & 0 \\
0 & \ddots & 0 & 0 \\
0 & 0 & \ddots & 0 \\
0 & 0 & 0 & p^{-t_2} 
\end{pmatrix}.
$$
}}
Then $\frac{A^tA}{||A||^2}$ has eigenvalue $p^{-t_2}$ of multiplicity $p^2$ and so
$$
\Ent(Z_{X_{\frp_1,\frp_2}})=p^2 \cdot (-p^{-t_2} \cdot \log (p^{-t_2}))=2\log p=(t_2-s_1)\log p.
$$

\item ($s_1=1, t_2=2$: the rank of $A$ is $p$ and $\mu=1+t_1=s_2+2$)
We compute
{\tiny{
$$ 
A=\begin{pmatrix}
1/p & \cdots & 1/p &  &  &  & 0 & 0 & 0 \\
\vdots & \ddots & \vdots &  & \cdots &  & 0 & \ddots & 0 \\
1/p & \cdots & 1/p &  &  &  & 0 & 0 & 0 \\
 &  &  &  &  &  &  &  &  \\
 & \vdots &  &  & \ddots &  &  & \vdots &  \\
 &  &  &  &  &  &  &  &  \\
0 & 0 & 0 &  &  &  & 1/p & \cdots & 1/p \\
0 & \ddots & 0 &  & \cdots &  & \vdots & \ddots & \vdots \\
0 & 0 & 0 &  &  &  & 1/p & \cdots & 1/p\\
0 & \vdots & 0 &  &  &  & 0 & 0& 0\\
0 & \cdots & 0 &  &  &  & 0& \cdots & 0
\end{pmatrix},
\quad \quad
\frac{A^tA}{||A||^2}=\begin{pmatrix}
p^{-t_2} & \cdots & p^{-t_2} &  &  &  & 0 & 0 & 0 \\
\vdots& \ddots & \vdots &  & \cdots &  & 0 & \ddots & 0 \\
p^{-t_2} & \cdots & p^{-t_2} &  &  &  & 0 & 0 & 0 \\
 &  &  &  &  &  &  &  &  \\
 & \vdots &  &  & \ddots &  &  & \vdots &  \\
 &  &  &  &  &  &  &  &  \\
0 & 0 & 0 &  &  &  & p^{-t_2} & \cdots & p^{-t_2} \\
0 & \ddots & 0 &  & \cdots &  & \vdots & \ddots & \vdots \\
0 & 0 & 0 &  &  &  & p^{-t_2} & \cdots & p^{-t_2} 
\end{pmatrix}.
$$
}}
Then $\frac{A^tA}{||A||^2}$ has eigenvalue $p^{-t_2+1}$ of multiplicity $p$ and so
$$
\Ent(Z_{X_{\frp_1,\frp_2}})=p\cdot (-p^{-t_2+1} \cdot \log (p^{-t_2+1}))=\log p=(t_2-s_1)\log p.
$$

\item ($s_1=2, t_2=2$: the rank of $A$ is $1$ and $\mu=2+t_1=s_2+2$)
We compute
{\tiny{
$$
A=\begin{pmatrix}
1/p & \cdots & 1/p &  &  &  & 1/p & \cdots & 1/p \\
\vdots & \ddots & \vdots &  & \cdots &  &\vdots  & \ddots & \vdots \\
1/p & \cdots & 1/p &  &  &  & 1/p & \cdots & 1/p \\
 0&  0&  0&\cdots  & 0 & \cdots &  0& 0 & 0 \\
 & \vdots &  &  & \ddots &  &  & \vdots &  \\
 &  &  &  &  &  &  &  &  \\
0 & 0 & 0 &  &  &  & 0 & \cdots &0 \\
0 & \ddots & 0 &  & \cdots &  & \vdots & \ddots & \vdots \\
0 & 0 & 0 &  &  &  & 0& \cdots & 0\\
\end{pmatrix},
\quad \quad
\frac{A^tA}{||A||^2}=\begin{pmatrix}
p^{-t_2}  & p^{-t_2}  & \cdots & p^{-t_2}  & p^{-t_2}  \\
p^{-t_2}  & p^{-t_2}  & \cdots & p^{-t_2}  & p^{-t_2}  \\
\vdots & \vdots & \ddots & \vdots & \vdots \\
p^{-t_2}  & p^{-t_2}  & \cdots & p^{-t_2}  & p^{-t_2}  \\
p^{-t_2}  & p^{-t_2}  & \cdots & p^{-t_2}  & p^{-t_2}  
\end{pmatrix} .
$$
}}
Then $\frac{A^tA}{||A||^2}$ has eigenvalue $p^{-t_2+2}$ of multiplicity $1$ and so
$$
\Ent(Z_{X_{\frp_1,\frp_2}})=-p^{-t_2+2} \cdot \log (p^{-t_2+2})=0=(t_2-s_1)\log p.
$$

\item ($s_1=0, t_2=1$: the rank of $A$ is $p$ and $\mu=t_1=s_2+1$)
We compute 
{\tiny{
$$
A=\begin{pmatrix}
1/p & 0& 0 &  0& 0 &  & 0 & \cdots & 0 \\
\vdots &0 &  & \vdots & \cdots &  &\vdots  & \ddots & \vdots \\
1/p & 0 & 0 &  0&  &  & 0 & \cdots & 0 \\
 0&  1/p&  0&\cdots  & 0 & \cdots &  0& 0 & 0 \\
 & \vdots &  &  &&  &  & \vdots &  \\
 0&  1/p&  &  &  &  &  &  &  \\
0 & 0 & \ddots &  &  &  & 0 & \cdots &0 \\
0 & \ddots & 0 &1/p  & 0 &  & \vdots & \ddots & \vdots \\
0 & 0 & 0 &\vdots  &  &  & 0& \cdots & 0\\
0 & 0 & 0 & 1/p & 0 &  & &  &  \\
0 & \vdots & 0 &   \vdots&  &  & 0& \cdots & 0 \\
0 & 0 & 0 &   0&  0&  & 0& \cdots & 0
\end{pmatrix},
\quad \quad
\frac{A^tA}{||A||^2}= \begin{pmatrix}
p^{-t_2} & 0 & 0 & 0 & \cdots & 0 \\
0 & \ddots & 0 & 0 & \cdots & 0 \\
0 & 0 & p^{-t_2} & 0 & \cdots & 0 \\
0 & 0 & 0 & 0 & \cdots & 0 \\
\vdots & \vdots & \vdots & 0 & \ddots & 0 \\
0 & 0 & 0 & 0 & \cdots & 0 
\end{pmatrix}.
$$
}}
Then $\frac{A^tA}{||A||^2}$ has eigenvalue $p^{-t_2}$ of multiplicity $p$ and so
$$
\Ent(Z_{X_{\frp_1,\frp_2}})=p\cdot(-p^{-t_2} \cdot \log (p^{-t_2}))=\log p=(t_2-s_1)\log p.
$$

\item ($s_1=1, t_2=1$: the rank of $A$ is $1$ and $\mu=1+t_1=s_2+1$)
We compute
{\tiny{
$$
A=\begin{pmatrix}
1/p & \cdots & 1/p &0  &  &  & 0 & \cdots & 0 \\
\vdots & \ddots & \vdots &0  & \cdots &  &\vdots  & \ddots & \vdots \\
1/p & \cdots & 1/p & 0 &  &  & 0 & \cdots & 0 \\
 0&  0&  0&\cdots  & 0 & \cdots &  0& 0 & 0 \\
 & \vdots &  &  & \ddots &  &  & \vdots &  \\
 &  &  &  &  &  &  &  &  \\
0 & 0 & 0 &  &  &  & 0 & \cdots &0 \\
0 & \ddots & 0 &  & \cdots &  & \vdots & \ddots & \vdots \\
0 & 0 & 0 &  &  &  & 0& \cdots & 0\\
\end{pmatrix},
\quad \quad
\frac{A^tA}{||A||^2}= \begin{pmatrix}
p^{-t_2} & \cdots & p^{-t_2} & 0 & 0 & 0 \\
\vdots & \ddots & \vdots & 0 & \cdots & 0 \\
p^{-t_2} & \cdots & p^{-t_2} & 0 & 0 & 0 \\
0 & 0 & 0 & 0 & 0 & 0 \\
0 & \vdots & 0 & 0 & \ddots & 0 \\
0 & 0 & 0 & 0 & 0 & 0 
\end{pmatrix}.
$$
}}
Then $\frac{A^tA}{||A||^2}$ has eigenvalue $p^{-t_2+1}$ of multiplicity $1$ and so
$$
\Ent(Z_{X_{\frp_1,\frp_2}})=-p^{-t_2+1} \cdot \log (p^{-t_2+1})=0=(t_2-s_1)\log p.
$$

\end{itemize}
\end{proof}

\subsection{A generalized formula without $Cl({X_S})[p]=0$} \label{subsec4.1}
Here we drop the assumption $Cl({X_S})[p]=0$ and $|S|=2$.
We use the glueing formula of ACST to derive the generalized formula in Theorem \ref{gthm}.
Choose any finite set $S_3$ of primes disjoint from $S$ and write
\be
S':=S \sqcup S_3
\ee
For $x_S \in \Gamma(\cF_{S}, \mathcal{CS}_S)$ and $x_{S_3} \in \Gamma(\cF_{S_3}, \mathcal{CS}_{S_3})$, we define $x_{S'} \in \Gamma(\cF_{S'}, \mathcal{CS}_{S'})$ by
\bea \label{sectionsum}
x_{S'}(\rho_S, \rho_{S_3}) = [x_S(\rho_S) + x_{S_3}(\rho_{S_3})].
\eea
The glueing formula (\cite[Theorem 5.2.1]{HKM}) gives that for $\rho\in \cF_{{X_S}}$
\begin{eqnarray}\label{glue}
\mathcal{CS}_{{X_S}}^{x_S} (\rho) = \mathcal{CS}_{X_{S'}}^{x_{S'}}( \rho\circ \eta^{S',S}) - \mathcal{CS}_{ V_{S_3}}^{x_{S_3}} ((\rho \circ u_\frp)_{\frp \in S_3})
\end{eqnarray}
where $\eta^{S',S}: \Pi^{S'} \to \Pi^S$ is the natural quotient map, $u_\frp: \tilde \Pi_\frp:=\Gal({F}_\frp^{ur}/{F}_\frp) \to \Pi^S$ is the embedding for $\frp \in S_3$, and for $\tilde \rho_\frp \in \Hom(\tilde \Pi_\frp, \mu_p)$ 
$$
\mathcal{CS}_{V_{S_3}}^{x_{S_3}}((\tilde \rho_\frp)_{\frp \in S_3}) :=
 \sum_{\frp \in S_3}\inv_\frp\left(\tilde \beta_{\frp} -x_\frp (\tilde \rho_\frp) \right)\in \Z/p\Z
$$
with $d(\tilde \beta_\frp) =c \circ \tilde \rho_{\frp}$ (using the unramified trivialisation $H^3(\tilde \Pi_\frp, \Z/p\Z)=0$).
Define
\begin{eqnarray}\label{unr}
Z_{V_{S_3}^\ast}^{x_{S_3}} (\rho_{S_3}):= \sum_{\cF_{V_{S_3}}(\rho_{S_3})\ni(\tilde \rho_\frp)_{\frp \in S_3}} \zeta_p^{\mathcal{CS}_{V_{S_3}^\ast}^{x_{S_3}}((\tilde \rho_\frp)_{\frp \in S_3})}
:= \sum_{\cF_{V_{S_3}}(\rho_{S_3})\ni (\tilde \rho_\frp)_{\frp \in S_3}} \zeta_p^{-\mathcal{CS}_{V_{S_3}}^{x_{S_3}}((\tilde \rho_\frp)_{\frp \in S_3})}
\end{eqnarray}
and $\cF_{V_{S_3}}(\rho_{S_3})=\{ \tilde \rho=(\tilde \rho_\frp)_{\frp \in S_3} \in  \prod_{\frp \in S_3} \Hom(\tilde \Pi_\frp, \mu_p) : \tilde \rho_\frp \circ q_\frp = \rho_\frp, \quad \frp \in S_3\}$ (here $q_\frp:\Pi_\frp \to \tilde \Pi_\frp$ is the natural quotient map).
Then $Z_{V_{S_3}^\ast}^{x_{S_3}} \in \cH_{S_3^\ast}^{x_{S_3}}$ and $Z_{V_{S_3}^\ast}^{x_{S_3}}(\rho_{S_3})$ are independent of choices of $\tilde \beta_\frp$ (see \cite[Theorem 5.1.4]{HKM}). 

\begin{lemma}\label{inner}\cite[Theorem 5.2.4]{HKM}
For any section $x_{S'}$ satisfying \eqref{sectionsum}, we have that
$$
Z_{{X_S}}^{x_S}(\rho_S) =\sum_{\rho_{S_3} \in \cF_{S_3}^{ur}} Z_{X_{S'}}^{x_{S'}} (\rho_{S}, \rho_{S_3}) \cdot Z_{V_{S_3}^\ast}^{x_{S_3}} (\rho_{S_3}), \quad (\rho_{S},\rho_{S_3}) = \rho_{S'}.
$$
\end{lemma}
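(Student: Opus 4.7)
The plan is to unfold the definitions of both sides of Lemma~\ref{inner}, insert the pointwise glueing formula \eqref{glue} for the Chern-Simons invariant, and reindex the resulting sum using a natural bijection between the $S$- and $S'$-parameter sets. The key technical ingredients are \eqref{glue} (which is treated as known) together with a group-theoretic identification relating $\cF_{{X_S}}$ and the ``unramified-at-$S_3$'' part of $\cF_{X_{S'}}$.

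Concretely, I would start from \eqref{adwi} and substitute \eqref{glue} into the exponent to obtain
\[
\tilde Z_{{X_S}}^{x_S}(\rho_S) \;=\; \frac{1}{p} \sum_{\rho \in \cF_{{X_S}}(\rho_S)} \zeta_p^{\mathcal{CS}_{X_{S'}}^{x_{S'}}(\rho \circ \eta^{S',S})}\cdot \zeta_p^{-\mathcal{CS}_{V_{S_3}}^{x_{S_3}}((\rho \circ u_\frp)_{\frp \in S_3})}.
\]
Next I would reindex the sum via the pullback $\rho \mapsto \rho':=\rho\circ \eta^{S',S}$. This map embeds $\cF_{{X_S}}(\rho_S)$ into $\cF_{X_{S'}}$, and the key structural claim is that its image consists precisely of those $\rho' \in \cF_{X_{S'}}$ whose restriction to every $\frp \in S_3$ is unramified. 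This in turn follows from the fact that the kernel of $\Pi^{S'} \twoheadrightarrow \Pi^S$, viewed in the abelianization that controls $\Z/p\Z$-characters, is generated by the images of inertia at the primes of $S_3$. Partitioning the parameter set by $\rho_{S_3} := \loc_{S_3}^{S'}(\rho') \in \cF_{S_3}^{ur}$ then produces the bijection
\[
\cF_{{X_S}}(\rho_S) \;\stackrel{\sim}{\longleftrightarrow}\; \bigsqcup_{\rho_{S_3} \in \cF_{S_3}^{ur}} \cF_{X_{S'}}(\rho_S, \rho_{S_3}).
\]

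For each fixed $\rho_{S_3}$, the local unramified components $\rho\circ u_\frp$ are determined by $\rho_{S_3}$ under the identification $\cF_\frp^{ur}\simeq \Hom(\tilde \Pi_\frp,\Z/p\Z)$, so the factor $\zeta_p^{-\mathcal{CS}_{V_{S_3}}^{x_{S_3}}((\rho \circ u_\frp)_{\frp \in S_3})}$ depends only on $\rho_{S_3}$ and pulls out of the inner sum. By \eqref{unr} this pulled-out factor equals $Z_{V_{S_3}^\ast}^{x_{S_3}}(\rho_{S_3})$, while the remaining inner sum is exactly $p\cdot \tilde Z_{X_{S'}}^{x_{S'}}(\rho_S,\rho_{S_3})$ by \eqref{adwi} applied to $S'$. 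This yields the identity at the un-normalized level; the final step is to verify $\|\tilde Z_{{X_S}}^{x_S}\| = \|\tilde Z_{X_{S'}}^{x_{S'}}\|$ (up to the appropriate unramified $S_3$-factor) using the bijection above and the orthonormality of the $\delta$-bases, so that the identity passes to the normalized state vectors.

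I expect the main obstacle to be the bijection step---establishing the group-theoretic assertion that an abelian character of $\Pi^{S'}$ unramified at $S_3$ necessarily factors through $\Pi^S$---together with the careful bookkeeping of cocycle representatives under the compatibility \eqref{sectionsum} of sections, which is what guarantees the clean separation of the $S$- and $S_3$-contributions in \eqref{glue}. Once these points are in place, the remainder of the argument is a direct rearrangement of the defining sums.
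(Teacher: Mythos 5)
Your argument is correct and is essentially the paper's own proof: substitute the glueing formula \eqref{glue} into the defining sum \eqref{adwi}, reindex via the correspondence $\rho \mapsto (\rho\circ\eta^{S',S},(\rho\circ u_\frp)_{\frp\in S_3})$---whose surjectivity onto the $S_3$-unramified part rests exactly on your key claim that a character of $\Pi^{S'}$ unramified at every $\frp\in S_3$ factors through $\Pi^S$---and pull out the $S_3$-contribution, which by \eqref{unr} equals $Z_{V_{S_3}^\ast}^{x_{S_3}}(\rho_{S_3})$ because $\cF_{V_{S_3}}(\rho_{S_3})$ is a singleton for unramified $\rho_{S_3}$; this is precisely the bijection \eqref{bij} on which the paper's proof is based. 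The only superfluous piece is your final norm comparison: as the lemma is stated and used (e.g.\ in Lemma \ref{triviality}), the identity is at the level of the unnormalized vectors $\tilde Z$, so no passage to normalized state vectors is required.
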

\begin{proof}
The equality follows from the glueing formula \eqref{glue}, the definitions of $Z_{{X_S}}^{x_S}, Z_{X_{S'}}^{x_{S'}}$ and $Z_{V_{S_3}^\ast}^{x_{S_3}}$ based on the following bijection\footnote{The surjectivity of $\eta^{S',S}$ implies the injectivity of the map. For the surjectivity of the map, consider $(\tau, (\tilde \tau_\frp)_{\frp \in S_3}) \in \cF_{X_{S'}}(\rho_S,\rho_{S_3}) \times \cF_{V_{S_3}}(\rho_{S_3})$. Then there exists $\rho \in \cF_{{X_S}}$ such that $\tau = \rho \circ \eta^{S',S}$, since $\loc^{S'}_{\frp}(\tau)$ is unramified for $\frp \in S_3$, and $\rho \circ u_\frp =\tilde \tau_\frp$ for $\frp \in S_3$ due to the surjectivity of $q_\frp$. }
\begin{eqnarray}\label{bij}
\cF_{{X_S}}(\rho_S) \simeq 
\bigsqcup_{\rho_{S_3} \in \cF_{S_3}}
 \left(\cF_{X_{S'}}(\rho_S,\rho_{S_3}) \times \cF_{V_{S_3}}(\rho_{S_3})\right), 
\quad \rho \mapsto (\rho \circ \eta^{S',S}, (\rho\circ u_\frp)_{\frp \in S_3})
\end{eqnarray}
where we recall $\cF_{X_{S'}}(\rho_{S'})=\{ \rho \in \cF_{X_{S'}} : \loc_{S'}(\rho) = \rho_{S'}\}$.
In fact, there is a bijection
$$
\bigsqcup_{\rho_{S_3} \in \cF_{S_3}}
 \left(\cF_{X_{S'}}(\rho_S,\rho_{S_3}) \times \cF_{V_{S_3}}(\rho_{S_3})\right) \simeq 
 \bigsqcup_{\rho_{S_3} \in \cF_{S_3}^{ur}}
 \left(\cF_{X_{S'}}(\rho_S,\rho_{S_3}) \times \cF_{V_{S_3}}(\rho_{S_3})\right)
 $$
  where $\cF_{S_3}^{ur}$ consists of unramified representations of $\cF_{S_3}$.
\end{proof}

\begin{lemma}\label{triviality}
If we choose any finite set $S_3$ of primes which is disjoint from $S$ such that $Cl(X_{S \sqcup S_3})[p]=0$ (such $S_3$ always exists), then
there exists a local section $x_{S\sqcup S_3}$ to the $\Z/p\Z$-torsor map $\varpi_{S\sqcup S_3}:\mathcal{CS}_{S\sqcup S_3} \to \cF_{S \sqcup S_3}$ such that
\be
\tilde Z_{{X_S}}^{x_S} (\rho_S)=\left\{
  \begin{array}{@{}ll@{}}
  \frac{1}{p} \sum_{\rho_{S_3} \in \cF_{S_3}^{ur}} Z_{V_{S_3}^\ast}^{x_{S_3}} (\rho_{S_3}), & \text{if}\  \rho_S \in \loc_{S}(\cF_{X_{S}}) \\
    0, & \text{otherwise}
  \end{array}\right.
\ee
where
$
Z_{V_{S_3}^\ast}^{x_{S_3}} (\rho_{S_3})$ is given in \eqref{unr}. In particular, $\tilde Z_{{X_S}}^{x_S}(\rho_S)$ obtains the same value regardless of $\rho_S$, once we fix $S_3$.
\end{lemma}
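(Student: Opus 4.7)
The plan is to reduce to the $Cl(X_S)[p]=0$ situation handled in Subsection \ref{sub3.2} by enlarging $S$ to $S' = S \sqcup S_3$, applying the glueing formula (Lemma \ref{inner}), and invoking an appropriate generalization of Lemma \ref{trivial}. The existence of $S_3$ with $Cl(X_{S \sqcup S_3})[p]=0$ follows from Chebotarev's density theorem: since $Cl(X_S)[p]$ is finite, one can find finitely many primes disjoint from $S$ whose Frobenius classes generate $Cl(X_S)[p]$ inside the class group, so that they become trivial after passing to $Cl(X_{S'})$.

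Since $Cl(X_{S'})[p] = 0$, the global Kummer isomorphism $\kappa_p: U_{S'}/U_{S'}^p \simeq \cF_{X_{S'}}$ holds, and I would construct $x_{S'}$ via the formula \eqref{localsection} at each prime of $S'$, then decompose $x_{S'} = x_S + x_{S_3}$ as in \eqref{sectionsum}. A straightforward generalization of Lemma \ref{trivial} to arbitrary $|S'|$ should apply: the injectivity of $\loc_{S'}$ from $Cl(X_{S'})[p]=0$ combined with the uniform choice of local and global Kummer liftings forces the cross-terms $\loc^{S'}_\frp(\kappa_{p^2}(\tilde u_\rho)) - \kappa_{p^2}^\frp(\tilde u_{\rho_\frp})$ to vanish as in Subsection \ref{subsec3.3}, giving $\tilde Z_{X_{S'}}^{x_{S'}}(\rho_{S'}) = 1/p$ whenever $\rho_{S'} \in \loc_{S'}(\cF_{X_{S'}})$ and $0$ otherwise.

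Substituting this into Lemma \ref{inner} yields
\[
\tilde Z_{X_S}^{x_S}(\rho_S) = \frac{1}{p} \sum_{\substack{\rho_{S_3} \in \cF_{S_3}^{ur}\\ (\rho_S, \rho_{S_3}) \in \loc_{S'}(\cF_{X_{S'}})}} Z_{V_{S_3}^\ast}^{x_{S_3}}(\rho_{S_3}).
\]
The vanishing statement is immediate: if $\rho_S \notin \loc_S(\cF_{X_S})$, then for any unramified $\rho_{S_3}$ a global lift $\rho' \in \cF_{X_{S'}}$ of $(\rho_S, \rho_{S_3})$ would factor through $\Pi^S$, realising $\rho_S$ as a localization from $\cF_{X_S}$, a contradiction. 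When $\rho_S \in \loc_S(\cF_{X_S})$, the injectivity of $\loc_{S'}$ together with the bijection \eqref{bij} matches each admissible $\rho_{S_3}$ with a unique $\rho \in \loc_S^{-1}(\rho_S)$ via $\rho_{S_3} = (\rho \circ u_\frp)_{\frp \in S_3}$, so the sum collapses to $\frac{1}{p}\sum_{\rho \in \loc_S^{-1}(\rho_S)} Z_{V_{S_3}^\ast}^{x_{S_3}}((\rho \circ u_\frp)_{\frp \in S_3})$.

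To obtain both the constancy in $\rho_S$ and the stated closed form, I would select $S_3$ with $|S_3| = \dim_{\bF_p} Cl(X_S)[p] = \dim_{\bF_p} \Ker(\loc_S)$. The Frobenius-type map $\Ker(\loc_S) \to \cF_{S_3}^{ur}$, $\rho \mapsto (\rho \circ u_\frp)_\frp$, is then an isomorphism of $\bF_p$-vector spaces (injective by $Cl(X_{S'})[p]=0$, with matching source and target dimensions), and hence sends any coset $\loc_S^{-1}(\rho_S)$ bijectively onto $\cF_{S_3}^{ur}$. This converts the partial sum into $\frac{1}{p}\sum_{\rho_{S_3} \in \cF_{S_3}^{ur}} Z_{V_{S_3}^\ast}^{x_{S_3}}(\rho_{S_3})$, manifestly independent of $\rho_S$. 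The main obstacle I anticipate is the verification of the generalization of Lemma \ref{trivial} to $|S'| > 2$, which requires tracking the cup-product expression for $\mathcal{CS}_{X_{S'}}^{x_{S'}}$ carefully across several primes; a secondary subtlety is securing the minimality $|S_3| = \dim_{\bF_p} Cl(X_S)[p]$ by Chebotarev without conflicting with the disjointness from $S$.
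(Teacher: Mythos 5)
Your proposal follows the same skeleton as the paper's proof: generalize Lemma \ref{trivial} to $S'=S\sqcup S_3$ (the computation of Subsection \ref{sub3.2} is already written for arbitrary $r$, and $Cl(X_{S'})[p]=0$ gives injectivity of $\loc_{S'}$), then feed the resulting values $1/p$ into the glueing identity of Lemma \ref{inner}; the vanishing case you obtain through the glueing formula, whereas the paper reads it off directly from the definition \eqref{adwi} --- an immaterial difference. Where you genuinely depart from the paper is at the key step. The paper simply asserts that $\rho_S\in\loc_S(\cF_{X_S})$ together with $\rho_{S_3}\in\cF_{S_3}^{ur}$ forces $(\rho_S,\rho_{S_3})\in\loc_{S'}(\cF_{X_{S'}})$, so that every unramified $\rho_{S_3}$ contributes $1/p$; you correctly observe that this amounts to surjectivity of the evaluation map $\Ker(\loc_S)\to\cF_{S_3}^{ur}$, $\rho\mapsto(\rho\circ u_\frp)_{\frp\in S_3}$, and you secure it by taking $|S_3|=\dim_{\bF_p}Cl(X_S)[p]$, so that the injectivity which $Cl(X_{S\sqcup S_3})[p]=0$ actually provides becomes an isomorphism and each fibre $\loc_S^{-1}(\rho_S)$ maps bijectively onto $\cF_{S_3}^{ur}$. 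This buys a complete justification of both the closed form and the constancy in $\rho_S$, at the price of establishing the lemma only for such a minimal $S_3$ rather than for an arbitrary $S_3$ with $Cl(X_{S\sqcup S_3})[p]=0$; the restriction is harmless, since Theorem \ref{gthm} needs only one good choice of $S_3$, and indeed for an oversized $S_3$ only the coset of the image of $\Ker(\loc_S)$ inside $\cF_{S_3}^{ur}$ can contribute, so your more cautious formulation is the safer reading of the statement. One small correction: to force $Cl(X_{S\sqcup S_3})[p]=0$ the classes of the primes in $S_3$ must generate $Cl(X_S)/pCl(X_S)$ (equivalently the $p$-part of $Cl(X_S)$, by Nakayama), not the subgroup $Cl(X_S)[p]$; the two spaces have the same $\bF_p$-dimension, so your count for $|S_3|$ and the Chebotarev argument in the class field of $Cl(X_S)$ go through unchanged.
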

\begin{proof}
By Lemma \ref{trivial} (note that the same proof works for an arbitrary partition $S=S_1\sqcup S_2$, though Lemma \ref{trivial} was stated in the case $\#S_1=\#S_2=1$), there is a section $x_{S'}$ ($S'=S \sqcup S_3$) such that
$$
\tilde Z_{X_{S'}}^{x_{S'}}(\rho_{S'}) = \tilde Z_{X_{S'}}^{x_{S'}}(\rho_{S},\rho_{S_3}) =\frac{1}{p}, \quad \rho_{S'} \in \loc_{S'}(\cF_{X_{S'}})
$$
because of $Cl(X_{S'})[p] =Cl(X_{S \sqcup S_3})[p]=0$.
Observe that $\rho_S \in \loc_{S}(\cF_{X_{S}})$ implies $\rho_{S'} \in \loc_{S'}(\cF_{X_{S'}})$, since $\rho_{S_3} \in \cF_{S_3}^{ur}$.
Thus Lemma \ref{inner} implies that
$$
\tilde Z_{{X_S}}^{x_S}(\rho_S) =  \sum_{\rho_{S_3} \in \cF_{S_3}^{ur}} p^{-1}\cdot Z_{V_{S_3}^\ast}^{x_{S_3}} (\rho_{S_3}),\quad \rho_{S} \in \loc_S(\cF_{X_S}).
$$
If $\rho_{S} \notin \loc_{S}(\cF_{X_{S}})$, the result clearly holds by definition \eqref{adwi}.
\end{proof}
Now we prove the generalized formula (Theorem \ref{gthm}). Choose subsets $S_1,S_2$ of $S$ such that $S=S_1 \sqcup S_2$ ($S_1 \cap S_2 =\phi$) and neither $S_1$ nor $S_2$ is empty.
\begin{proof}[Proof of Theorem \ref{gthm}]
We have the following commutative diagram
\[\begin{tikzcd}
	&&&& {\cF_{S_1}} \\
	{\cF_X^S} && {\cF_{X_S}} &&& {} & {\cF_S=\cF_{S_1}\times\cF_{S_2}} \\
	&&& {} & {\cF_{S_2}}
	\arrow["{\loc^S_{S_1}}", from=2-3, to=1-5]
	\arrow["{\loc^S_{S_2}}"', from=2-3, to=3-5]
	\arrow[hook, from=2-1, to=2-3]
	\arrow["{\loc_S=\loc_S^S}", from=2-3, to=2-7]
	\arrow["{pr_{_{S_1}}}"', from=2-7, to=1-5]
	\arrow["{pr_{_{S_2}}}", from=2-7, to=3-5]
\end{tikzcd}\]
where $\cF_X^S:=\Ker(\loc_S)\simeq \Hom(\Pi^{ur,ab}_S,\Z/p\Z)$ (recall that $\Pi^{ur,ab}_S$ is the maximal unramified abelian extension of ${F}$ such that all the primes above $S$ split), $\loc^S_{S_i}$ is the map induced from the embedding $\Pi_\frp \to \Pi^S$ ($\frp \in S_i$), and $pr_{S_i}$ is the projection map to $\cF_{S_i}$.

Let $\nu$ be the $\bF_p$-dimension of $\cF_X^S$.
Let $s_i$ be the $\bF_p$-dimension of the kernel of $\loc^S_{S_i}$ for each $i$.
Let $t_i$ be the $\bF_p$-dimension of the image of $\loc^S_{S_i}$ for each $i$.
Define $\mu_i :=\dim_{\bF_p}\cF_{S_i}$ and $\mu:=\frac{\mu_1+\mu_2}{2}$. Then $\dim_{\bF_p}(\cF_{X_S})=\mu+\nu$ by Lemma \ref{lgr}.
Then we have
\be
t_2-s_1+\nu=\left(\dim_{\bF_p}(\cF_{X_S})-\dim_{\bF_p} (\Ker(\loc^S_{S_1}) + \Ker(\loc^S_{S_2}))\right),
\ee
because $t_2-s_1+\nu = \mu-s_1-s_2+2 \nu$ (due to $\mu+\nu=s_2+t_2$) and so $t_2-s_1+\nu$ is the $\bF_p$-dimension $\mu+\nu$ of $\cF_{X_S}$ minus the $\bF_p$-dimension $s_1+s_2-\nu$ of the space $\Ker(\loc^S_{S_1}) + \Ker(\loc^S_{S_2})$.

We choose a section $x_{S'}$ as in Lemma \ref{triviality}.
By using the same technique as in the proof of Theorem \ref{mt}, we compute the matrix $A$ associated to $\tilde Z_{X_{S_1,S_2}}^{x_S}$ (possibly permuting the basis elements $\delta_{\rho_S}$ of $\cF_S$), the matrix $A^tA$, and the entanglement entropy 

The matrix $A$ has the following 3 properties:
\begin{itemize}
\item there are $p^{t_2-s_1+\nu}$ linearly independent non-zero columns;
\item if a row vector is non-zero, there are only $p^{s_1-\nu}$ non-zero entries in that row vector;
\item there are only $p^\mu$ non-zero entries.
\end{itemize}

In fact, the $p^{\mu_1}\times p^{\mu_2}$ matrix $A$ is a block diagonal type matrix whose each block is a $p^{\mu-t_2}\times p^{s_1-\nu}$ matrix with each entry\footnote{We use the notation $\mu_{_{S_3}}$, since it only depends on the unramified representations in $\cF_{S_3}$.}
$$
\mu_{_{S_3}}:= \tilde Z_{X_S}^{x_S}(\rho_S) =  p^{-1} \cdot \sum_{\rho_{S_3} \in \cF_{S_3}} Z_{V_{S_3}^\ast}^{x_{S_3}} (\rho_{S_3})
$$
 and the number of the non-zero blocks is $p^{t_2-s_1+\nu}$:
\[
  \setlength{\arraycolsep}{0pt}
 A= \begin{pmatrix}
    \fbox{$B_1$} & 0 & \cdots &&0 &\\
    0 & \fbox{$B_2$} & 0 & \vdots &&\\
    0 & 0 & \ddots & 0& \vdots &\\
    \vdots & \vdots & 0  &\fbox{$B_{p^{t_2-s_1+\nu}}$} & &0\\
    0 & 0 &\cdots &0& \cdots& 0\\
  \end{pmatrix}, \quad
  B_i = 
  \begin{pmatrix}
  \mu_{_{S_3}}& \cdots & \mu_{_{S_3}}\\
  \vdots&\ddots&\vdots\\
   \mu_{_{S_3}}&\cdots &  \mu_{_{S_3}}\\
  \end{pmatrix}
\]
where each $B_i$ is a $p^{\mu-t_2}\times p^{s_1-\nu}$ matrix.
Then the $p^{\mu_2}\times p^{\mu_2}$ matrix $A^t A/||A||^2$ is again a block diagonal type matrix whose each block is a $p^{s_1-\nu} \times p^{s_1-\nu}$ matrix with all the entries 
$$
p^{-t_2}=\frac{\mu_{_{S_3}}^2 \cdot p^{\mu-t_2}}{\mu_{_{S_3}}^2 \cdot p^{\mu}}
$$
 and the number of non-zero blocks is $p^{t_2-s_1+\nu}$ (note that $||A||^2=\mu_{_{S_3}}^2 \cdot p^\mu$, since each entry of $A$ is $\mu_{_{S_3}}$):
\[
  \setlength{\arraycolsep}{0pt}
 \frac{A^t A}{||A||^2}= \begin{pmatrix}
    \fbox{$C_1$} & 0 & \cdots &&0 &\\
    0 & \fbox{$C_2$} & 0 & \vdots &&\\
    0 & 0 & \ddots & 0& \vdots &\\
    \vdots & \vdots & 0  &\fbox{$C_{p^{t_2-s_1+\nu}}$} & &0\\
    0 & 0 & && \cdots& 0\\
  \end{pmatrix}, \quad
  C_i = 
  \begin{pmatrix}
  p^{-t_2}& \cdots & p^{-t_2}\\
  \vdots&\ddots&\vdots\\
  p^{-t_2}&\cdots &p^{-t_2}\\
  \end{pmatrix}
\]
where each $C_i$ is a $p^{s_1-\nu}\times p^{s_1-\nu}$ matrix.
Therefore $A^t A/||A||^2$ has eigenvalue $p^{-t_2 +s_1-\nu}$ with multiplicity $p^{t_2-s_1+\nu}$
and so we conclude that
$$
\Ent(Z_{X_{S_1,S_2}})
=(-p^{-t_2 +s_1-\nu} \log p^{-t_2 +s_1-\nu}) \cdot p^{t_2-s_1+\nu} = (t_2-s_1+\nu) \log p.
$$
\end{proof}

\end{document}